\newtheorem*{rep@theorem}{\rep@title}
\newcommand{\newreptheorem}[2]{%
\newenvironment{rep#1}[1]{%
 \def\rep@title{#2 \ref{##1}}%
 \begin{rep@theorem}}%
 {\end{rep@theorem}}}
\def\namedlabel#1#2{\begingroup
	#2%
	\def\@currentlabel{#2}%
	\phantomsection\label{#1}\endgroup
}
\newtheorem{thm}{Theorem}[section]
\newtheorem{prop}[thm]{Proposition}
\newtheorem{lemma}[thm]{Lemma}
\newtheorem{corol}[thm]{Corollary}
\theoremstyle{definition}
\newtheorem{remark}{Remark}[section]
\author{Leandro Aurichi, Paulo Magalhães Júnior and Lucas Real\footnote{Institute of Mathematics and Computer Sciences, University of S\~{a}o Paulo, S\~{a}o Paulo, Brazil}}
\date{}
\begin{document}

\title{Topological remarks on end and edge-end spaces}




\maketitle

\begin{abstract}
	The notion of ends in an infinite graph $G$ might be modified if we consider them as equivalence classes of infinitely edge-connected rays, rather than equivalence classes of infinitely (vertex-)connected ones. This alternative definition yields the \textit{edge-}end space $\Omega_E(G)$ of $G$, in which we can endow a natural (edge-)end topology. For every graph $G$, this paper proves that $\Omega_E(G)$ is homeomorphic to $\Omega(H)$ for some possibly another graph $H$, where $\Omega(H)$ denotes its usual end space. However, we also show that the converse statement does not hold: there is a graph $H$ such that $\Omega(H)$ is not homeomorphic to $\Omega_E(G)$ for any other graph $G$. In other words, as a main result, we conclude that the class of topological spaces $\Omega_E = \{\Omega_E(G) : G \text{ graph}\}$ is strictly contained in $\Omega = \{\Omega(H) : H \text{ graph}\}$.
\end{abstract}


\section{Introduction:}
\paragraph{}
In a graph $G$, a \textbf{ray} is an one-way infinite path, whose infinite connected subgraphs are called its \textbf{tails}. Similarly, a two-way infinite path is referred to as a \textbf{double-ray}. Over the set $\mathcal{R}(G)$ of rays of $G$, one can define the two (similar) equivalence relations below:

\begin{itemize}
	\item \textbf{End relation ($\sim$):} For $r,s \in \mathcal{R}(G)$, we write $r\sim s$ whenever $r$ and $s$ are infinitely connected, i.e., there are infinitely many (vertex-)disjoint paths connecting $r$ and $s$. Equivalently, no finite set $S\subset V(G)$ \textbf{separates} $r$ and $s$, in the sense that the tails of $r$ and $s$ belong to different connected components of $G\setminus S$. Then, $\sim$ is indeed an equivalence relation on $\mathcal{R}(G)$, whose equivalence class of a ray $r$ (written as $[r]$) is called an \textbf{end} of $G$. The quotient $\mathcal{R}(G)/\sim$, referred to as the \textbf{end space} of $G$, is denoted by $\Omega(G)$;
	\item \textbf{Edge-end relation ($\sim_E$):} Now, for $r,s \in \mathcal{R}(G)$, we write $r\sim_E s$ whenever $r$ and $s$ are infinitely edge-connected, i.e., there is an infinite family of \textit{edge-}disjoint paths connecting infinitely many vertices of $r$ to infinitely many vertices of $s$. Equivalently, no finite set $F\subset E(G)$ \textbf{separates} $r$ and $s$, in the sense that the tails of $r$ and $s$ belong to different connected components of $G\setminus F$. Under $\sim_E$, the equivalence class of a ray $r$ is denoted by $[r]_E$ and is called its \textbf{edge-end}. Then, we fix the notation $\Omega_E(G) = \mathcal{R}(G)/\sim_E$ for the \textbf{edge-end} space of $G$. 
\end{itemize}

Roughly speaking, $\sim_E$ is obtained from $\sim$ after changing the roles of vertices by edges. In particular, $\sim_E$ is a weaker identification, since infinitely (vertex-)connected rays are also infinitely edge-connected. If $G$ is locally finite, then both equivalence relations turn out to be the same. However, when there are vertices of infinite degree, $\sim_E$ might identify more rays. For example, the graph presented by Figure \ref{raioduplodominado} has two ends but only one edge-end.

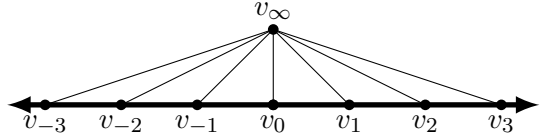
\begin{figure}[ht]
	\centering
	\begin{tikzpicture}
		\draw[line width=2pt, black, -latex] (0,0) -- (3.5,0);
		\draw[line width=2pt, black, -latex] (0,0) -- (-3.5,0);
		
		\foreach \x in {-3,-2,-1,0,1,2,3}
		\fill (\x,0) circle (2pt) node[below]{$v_{\x}$};
		
		\fill (0,1) circle (2pt) node[above]{$v_{\infty}$};
		
		\foreach \x in {-3,-2,-1,0,1,2,3}
		\draw (\x,0) -- (0,1);
		
		\fill (0,0) circle (2pt);
	\end{tikzpicture}
	
	\caption{This graph has two ends, corresponding to the rays $v_0v_1v_2\dots$ and $v_0v_{-1}v_{-2}\dots$, for example. However, it has only one edge-end.}
	\label{raioduplodominado}
\end{figure}

In the literature, the definition of $\sim$ as just presented was first introduced by Halin in \cite{halin}, although the notion of ends can be identified in previous algebraic discussions carried out by Hopf in \cite{hopf} and by Freudenthal in \cite{freudenthal} as standard references within geometric group theory. Since then, a sort of results concerning finite graphs was generalized for infinite ones with the support of $\Omega(G)$. Regarding locally finite graphs, for example, the reader might consult the survey of Diestel in \cite{survey}.

In its turn, inspired by that previous work of Halin, the above formalization of $\sim_E$ is due to Hahn, Laviolette and \v{S}irá\v{n} in \cite{edgeends}. When describing completions of metrics induced by edge lengths, Georgakopoulos in \cite{Georgakopoulos} also relied on this edge-end definition for introducing the $\mathrm{ETOP}$ topology over an infinite graph $G$ and its structure as $1$-complex. Besides these papers, few other references in the literature mention the edge-end structure of infinite graphs, although it is naturally suggested by Halin's original definition when considering an edge-related viewpoint. This incomplete state of art may be justified by the coincidence of $\sim$ and $\sim_E$ when restricted to locally finite graphs, so that the latter relation can be omitted in several studies within infinite graph theory. Despite that, the preprint \cite{nosso} exemplifies how edge-ends might be suitable for discussing edge-connectivity results in arbitrary infinite graphs, even allowing, for instance, extensions of Menger-type theorems.

In any case, when introducing the quotient spaces $\Omega(G)$ and $\Omega_E(G)$, the corresponding topologies below arise from the separating notions displayed by vertices and edges: 

\begin{itemize}
	\item \textbf{End space topology:} For an end $[r]\in \Omega(G)$, we fix a finite set $S\subset V(G)$ to define the following open basic neighborhood: $$\Omega(S,[r]) = \{[s]\in \Omega(G) : S\text{ does not separate }r\text{ and }s\}.$$
	\item \textbf{Edge-end space topology:} For an edge-end $[r]_E \in \Omega_E(G)$, we fix a finite set $F\subset E(G)$ to define the following open basic neighborhood: $$\Omega_E(F,[r]_E) = \{[s]_E\in \Omega_E(G) : F\text{ does not separate }r\text{ and }s\}.$$
\end{itemize}

By highlighting topological properties of the above definitions, this paper aims to investigate their differences even if the underlying graphs are changed. More precisely, let us denote by $\Omega$ and $\Omega_E$ the classes of topological spaces that arise, respectively, as end spaces and edge-end spaces of graphs. By considering disjoint union of $|X|$ rays, we can quickly check that $\Omega$ and $\Omega_E$ contain every discrete space $X$, for example. Actually, $\Omega$ and $\Omega_E$ also contains all the complete ultrametric spaces, which are the end spaces of trees (see Theorem 3.1 of \cite{caracterizacao}). In fact, we will describe in Section \ref{representacaoarestas} how every edge-end space can be obtained as the end space of a (possibly another) graph, proving that $\Omega_E\subseteq \Omega$. On a more challenging direction, the studies carried out by Section \ref{contraexemplo} conclude that the Alexandroff duplicate of the Cantor set is not an edge-end space of any graph, although Example 2.6 of \cite{caracterizacao} shows that this topological space belongs to $\Omega$. To summarize, Sections \ref{representacaoarestas}-\ref{contraexemplo} prove the following main result:

\begin{thm}\label{t1}
	$\Omega_E$ is a proper subclass of $\Omega$, i.e., every edge-end space is the end space of some graph, but the converse does not hold. 
\end{thm}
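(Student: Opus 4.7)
Given a graph $G$, I would construct an auxiliary graph $H$ in which finite vertex cuts play the role that finite edge cuts play in $G$. The obstruction to a naive subdivision construction is that a vertex $v\in V(G)$ of infinite degree is itself a single-vertex separator, and this cannot be matched by any finite edge cut of $G$. A natural remedy is to subdivide each edge of $G$ once (so that each edge $e$ is represented by a subdivision vertex $w_e$), and then to replace each vertex $v$ of infinite degree by an infinite highly-connected gadget --- for instance, a complete graph $K_v$ on $\deg_G(v)$ many vertices --- with each subdivided edge at $v$ attached to a distinct vertex of $K_v$. Since no finite vertex cut of $H$ can disconnect $K_v$, separation in $H$ is effectively controlled by the $w_e$'s, and these correspond bijectively to edges of $G$. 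Rays of $G$ lift to rays of $H$; conversely, rays of $H$ either project to rays of $G$ (by collapsing each $K_v$ back to $v$) or become eventually trapped inside a single $K_v$. Either way one obtains a candidate map $\Omega(H)\to\Omega_E(G)$, which I would verify to be a bijective homeomorphism by matching basic open neighborhoods $\Omega(V_0,[r])$ and $\Omega_E(F,[r]_E)$ through the $V(G)$--$E(G)$ correspondence.

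\textbf{Strictness $\Omega_E\subsetneq \Omega$.} Since $A(C)\in\Omega$ is already granted by \cite{caracterizacao}, it suffices to prove $A(C)\notin\Omega_E$. My plan is to isolate a topological invariant $P$ enjoyed by every edge-end space but violated by $A(C)$, and to derive a contradiction from a hypothetical homeomorphism $\Omega_E(G)\cong A(C)$. The feature of edge-end spaces to exploit is that an isolated edge-end $[r]_E$ is witnessed by a \emph{finite} edge cut $F$ with $\Omega_E(F,[r]_E)=\{[r]_E\}$; the finiteness of $F$ imposes strong combinatorial restrictions on how isolated edge-ends may cluster around non-isolated ones. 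In $A(C)$, by contrast, every non-isolated point is the limit of $2^{\aleph_0}$ isolated twins in a prescribed one-to-one fashion, and I expect this density to be incompatible with the finiteness of the isolating cuts. The contradiction should arise from a cardinality or separability count applied to the isolated edge-ends accumulating at a chosen non-isolated one.

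\textbf{Main obstacle.} The first direction is mostly bookkeeping once the auxiliary construction is in place, modulo care for rays straying into the $K_v$ gadgets and for the verification that the map is open in both directions. The real difficulty lies in the second direction: identifying the exact topological invariant $P$ that is simultaneously (a) provable in full generality for edge-end spaces from the finite-edge-cut structure, and (b) tight enough to exclude $A(C)$ without incidentally excluding genuine members of $\Omega_E$. Formulating and proving $P$ --- likely a constraint on how isolated points may accumulate at non-isolated ones, combining combinatorial and topological reasoning --- is where I anticipate most of the technical work to lie.
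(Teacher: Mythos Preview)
Your construction for the inclusion $\Omega_E\subseteq\Omega$ is essentially the paper's: the paper also blows up certain vertices of $G$ to cliques $K_v$ and checks that the induced map $\Omega_E(G)\to\Omega(H)$ is a homeomorphism. The differences are cosmetic --- the paper blows up only those $v$ that edge-dominate some ray (rather than all infinite-degree vertices) and omits the subdivision step by attaching the original edges directly to distinct vertices of $K_v$ --- but neither variation changes the substance of the argument.

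For the strictness direction your proposal diverges from the paper and, as written, has a genuine gap. The paper does not isolate a local invariant about how isolated points accumulate; instead it proves a global metrization theorem: \emph{every Lindel\"of first-countable edge-end space is metrizable} (Theorem~\ref{t2}). Since $A(C)$ is compact and first-countable but not second-countable (it has $2^{\aleph_0}$ pairwise disjoint open singletons), it is not metrizable and hence not in $\Omega_E$. The proof of this metrization theorem is where the work lies: it uses the inclusion result in the sharpened form that the graph $H$ produced there has every vertex edge-dominating at most one end, and then revisits the partition-tree machinery of Kurkofka--Pitz to show that for such $H$, under the Lindel\"of and first-countability hypotheses on $\Omega(H)$, one can build a partition tree displaying all ends whose ``essential'' subtree $\hat T$ is countable. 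This forces $\Omega(H)$ to be the end space of a countable graph, hence metrizable.

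Your proposed direct route --- arguing that finite edge cuts cannot isolate $2^{\aleph_0}$ edge-ends in the required pattern --- does not obviously go through: $G$ may have $2^{\aleph_0}$ edges, so there is no immediate cardinality obstruction to having pairwise distinct finite isolating cuts. To salvage a direct argument you would need to combine first-countability at a non-isolated edge-end with some structural fact about how a countable local base of edge-cut neighbourhoods constrains the nearby isolated edge-ends; this is exactly what the partition-tree analysis delivers, though indirectly. Your ``main obstacle'' paragraph is accurate: formulating the invariant $P$ is the hard part, and the paper's answer --- $P$ is ``Lindel\"of $+$ first-countable $\Rightarrow$ metrizable'' --- is proved via the Kurkofka--Pitz representation theory rather than by a self-contained combinatorial count on isolating cuts.
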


Multiple recent studies regarding end spaces of infinite graphs are revisited in order to conclude the above statement. In particular, Pitz in \cite{caracterizacao} brings a characterization of these spaces via purely topological criteria, answering a long-standing question proposed by Diestel in \cite{diestel1992}. As a correlated reference, due to Kurkofka and Pitz in \cite{representacao}, their representation theorem for end spaces plays a central role in the discussion carried out by Section \ref{contraexemplo}.

\section{Edge-end spaces as certain end spaces}\label{representacaoarestas}

\paragraph{}
In this section we prove the first part of Theorem \ref{t1}, by including the class of edge-end spaces into the class of end spaces. To that aim, it is useful to identify when a vertex is infinitely connected to a ray. Hence, we say that $v \in V(G)$ \textbf{dominates} a ray $r$ if, for every finite set $S\subset V(G)\setminus \{v\}$, the tail of $r$ lies on the same connected component of $v$ in $G\setminus S$. Clearly, this is equivalent to the existence of an infinite family of paths connecting $v$ and $r$ that are disjoint unless by $v$. Therefore, $v$ dominates $r$ if, and only if, it dominates any ray equivalent to $r$, allowing us to say that $v$ dominates the end $[r]$. On the other hand, if $v$ and a tail of $r$ belong to different connected components of $G\setminus S$, we say that the finite set $S\subset V(G)\setminus S$ \textbf{separates} $v$ and $r$, as well as $v$ from $[r]$.

Rewriting the above definitions in an edge-related viewpoint, we say that $v\in V(G)$ \textbf{edge-dominates} the ray $r$ if, for every finite set $F\subset E(G)$, the vertex $v$ and a tail of $r$ belong to the same connected component of $G\setminus F$. Equivalently, there is an infinite family of edge-disjoint paths connecting $v$ and to infinitely many vertices of $r$. Hence, $v$ edge-dominates $r$ if, and only if, it dominates any other representative of the edge-end $[r]_E$. In particular, $v$ also edge-dominates every representative from $[r]$, in which case we say that $v$ \textit{edge-}dominates the \textit{end} $[r]$.

On the other hand, if $v$ and a tail of $r$ belong to different connected components of $G\setminus F$, we say that some finite set $F\subset E(G)$ \textbf{separates} $v$ and $r$, or even $v$ from $[r]_E$. Within these definitions, the fact that $\Omega_E\subseteq \Omega$ follows by an argument of connectedness:

\begin{thm}\label{volta}
	Let $G$ be a graph. Then, there is a graph $H$ whose vertices edge-dominate at most one end (as defined by the relation $\sim$) and such that $\Omega(H) \cong \Omega_E(G)$. 
\end{thm}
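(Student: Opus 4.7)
The natural strategy is to construct $H$ from $G$ by a local surgery that replaces every vertex of infinite degree by a locally finite one-ended tree. Concretely, for each $v\in V(G)$ of countably infinite degree, enumerate its incident edges as $(e_i^v)_{i\in\mathbb{N}}$, replace $v$ by a ray $w_0^v, w_1^v, w_2^v, \ldots$ in $H$, and reattach each edge $e_i^v$ at the vertex $w_i^v$ (vertices of larger degree are handled by an analogous surgery using a one-ended tree whose vertex set can be indexed by $I_v$). Because every vertex of $H$ then has finite degree, no vertex of $H$ admits infinitely many edge-disjoint paths to any single ray, so no vertex of $H$ edge-dominates any end of $H$, settling the first clause of the theorem.

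Next, I would set up a bijection $\Phi\colon \Omega_E(G)\to \Omega(H)$ by lifting each ray $r=v_0v_1v_2\ldots$ of $G$ to a ray $\tilde r$ of $H$ that interpolates, at every crossing of an infinite-degree vertex $v_k=v$, the unique finite segment of the replacement tree $T_v$ from the attachment vertex of $e_{k-1}^v$ to that of $e_k^v$. Well-definedness on edge-ends follows because infinitely many edge-disjoint $r$--$s$ paths in $G$ lift to internally vertex-disjoint $\tilde r$--$\tilde s$ paths in $H$: distinct edges at $v$ attach to distinct vertices of $T_v$, so previously coincident crossings at $v$ can be routed through disjoint segments of $T_v$. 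Surjectivity follows by projecting any ray of $H$ back onto the sequence of original $G$-vertices that it visits, and injectivity from the fact that a finite edge cut of $G$ separating $r$ from $s$ lifts to a finite vertex cut of $H$ separating $\tilde r$ from $\tilde s$.

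The main obstacle is verifying that $\Phi$ is a homeomorphism. One direction is routine: given a finite $F\subset E(G)$, the set $S\subset V(H)$ of attachment vertices of the edges of $F$ is finite, and any $H$-walk avoiding $S$ projects to a $G$-walk avoiding $F$, so $\Phi$ preserves the neighborhood relation in this direction. The converse requires, from a finite $S\subset V(H)$, extracting a finite $F\subset E(G)$ whose $G$-separation refines the $H$-separation by $S$. Vertices of the form $w_i^v\in S$ convert naturally to the edges $e_i^v\in E(G)$, but $S$ may slice the interior of some replacement tree $T_v$ at indices unrelated to any prescribed edge. Here the one-endedness of each $T_v$ is crucial: removing a finite set from a one-ended tree leaves exactly one infinite component plus finitely many finite components, so the indices sitting on the finite components form a finite subset of $I_v$ whose associated edges of $G$ may be added to $F$, after which $F$ achieves the desired refinement and $\Phi^{-1}$ is continuous as well.
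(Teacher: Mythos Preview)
Your surgery is performed on the wrong set of vertices, and this breaks surjectivity. You blow up every vertex of infinite degree, but infinite degree does not imply that the vertex edge-dominates a ray. Take $G$ to be an infinite star with center $v$: then $\Omega_E(G)=\emptyset$, yet your $H$ replaces $v$ by a ray and so acquires exactly one end. Your surjectivity argument (``project any ray of $H$ back onto the sequence of original $G$-vertices it visits'') silently assumes that a ray of $H$ cannot be eventually trapped inside a single replacement gadget $T_v$; when it is, the projection is the constant sequence $v,v,v,\ldots$, not a ray of $G$. The paper avoids this by replacing only the vertices in $D=\{v:v\text{ edge-dominates some ray}\}$; then a ray of $H$ that lives inside the gadget at $v$ can be matched with $\rho(s)$ for the ray $s$ that $v$ already edge-dominates (this is exactly the second case of the paper's surjectivity proof).

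There is a second slip: you claim ``every vertex of $H$ then has finite degree'', but a one-ended tree on an uncountable vertex set cannot be locally finite (a locally finite connected graph is countable). So for $v$ of uncountable degree your $T_v$ necessarily contains vertices of infinite degree, and your stated reason why no vertex of $H$ edge-dominates an end collapses. One can likely rescue the conclusion by arguing directly from the tree structure, but the argument as written does not do this.

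For comparison, the paper's construction expands each $v\in D$ to a \emph{clique} $K_v$ rather than a one-ended tree. This sacrifices local finiteness entirely (and only obtains the weaker conclusion ``edge-dominates at most one end'', not ``none''), but the high connectivity of the clique makes both the surjectivity case above and the continuity of $\Phi^{-1}$ essentially automatic: any two vertices of $K_v$ are adjacent, so one never has to worry about whether attachment points are cofinal in the gadget's end or about how a finite cut of the gadget interacts with the external edges. Your tree approach can be made to work once you restrict to $v\in D$ and choose $T_v$ so that any infinite set of attachment points is cofinal toward its unique end, but these checks are missing from the proposal.
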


Throughout this section, we will prove the above result by considering $H$ a graph obtained from $G$ after expanding some vertices to cliques. Formally, for each vertex $v\in V(G)$ that edge-dominates a ray, let $K_v$ be a complete graph of order $d(v)$, the degree of $v$. Hence, it is possible to fix $\rho_v: N(v) \to V(K_v)$ a bijection between the neighborhood $N(v) := \{u \in V(G) : uv \in E(G)\}$ and the vertices of $K_v$. Denoting by $D$ the set of vertices of $G$ that edge-dominate some ray, the vertex set of $H$ may be given by $V(H) = (V(G)\setminus D) \cup\displaystyle \bigcup_{v\in D}V(K_v) $, so that the elements of the (disjoint) union $\displaystyle \bigcup_{v\in D}V(K_v)$ are called \textbf{expanded vertices}. The edge set of $H$, in its turn, is given by the (disjoint) union $\displaystyle \bigcup_{v \in D}E(K_v)$ and the range of the function $\rho : E(G)\hookrightarrow E(H)$ defined as follows:
\begin{itemize}
	\item $\rho(uv) = uv$ if $u,v \in V(G)\setminus D$;
	\item $\rho(uv) = u\rho_v(u)$ if $u\in V(G)\setminus D$ but $v \in D$;
	\item $\rho(uv) = \rho_u(v)\rho_v(u)$ if $u,v \in D$.
\end{itemize}

Roughly speaking, $\rho$ is a map that attaches different edges incident in a vertex $v\in D$ to different vertices of $K_v$. Since adjacencies between vertices of $V(G)\setminus D$ are preserved, we regard $\rho$ as an \textbf{inclusion map} from $E(G)$ to $E(H)$. For a vertex $x \in K_v$, hence, we define its \textbf{canonical edge} to be the unique edge of $\rho(E(G))$ that has $x$ as an endpoint.

This inclusion map also translates rays of $G$ into rays of $H$ in a natural sense. In fact, if $r = v_0v_1v_2v_3\dots$ is a ray in $G$, we consider the ray $\rho(r)$ in $H$ whose presentation by its \textit{edges} is $\rho(v_0v_1)s_1\rho(v_1v_2)s_2\rho(v_2v_3)\dots$, where

\begin{itemize}
	\item $s_i =\emptyset$ is the empty edge if $v_i\notin D$, for $i \geq 1$;
	\item $s_i  = \rho_{v_i}(v_{i-1})\rho_{v_i}(v_{i+1})$ is the edge in $K_{v_i}$ connecting the canonical edges $\rho(v_{i-1}v_i)$ and $\rho(v_iv_{i+1})$, for $i \geq 1$.
\end{itemize}

Clearly, finite paths of $G$ can be recovered in $H$ by the same construction. In other words, if $P = v_0v_1\dots v_n$ is a path in $G$, we denote by $\rho(P)$ the path in $H$ whose edges are $\rho(v_0v_1)s_1\rho(v_1v_2)s_2\rho(v_2v_3)\dots s_{n-1}\rho(v_{n-1}v_n)$. Conversely, a path or a ray $r$ in $H$ has the form $\rho(s)$ for some path or some ray $s$ of $G$, accordingly, whenever $|E(r) \cap E(K_v)| \leq 1$ for every $v \in D$. In this sense, via $\rho$, the next technical result allows us to map edge-disjoint families of paths in $G$ to disjoint families of paths in $H$:

\begin{lemma}\label{familiadisjunta}
	Fix $r$ a ray in $G$. Let $R$ be a vertex or a ray that cannot be separated from $r$ by finitely many edges. Then, there is an infinite family $\{P_n\}_{n \in \mathbb{N}}$ of edge-disjoint paths such that:
	\begin{enumerate}
		\item For every $n \in \mathbb{N}$, $P_n$ connects $r$ and $R$, i.e., it has one endpoint in $r$ and the other in $R$;
		\item If $n \neq m$, then $V(P_n)\cap V(P_m) \subset D$.
	\end{enumerate}
\end{lemma}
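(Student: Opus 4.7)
The plan is to construct the paths $P_n$ inductively, using that the property ``no finite edge-cut separates $r$ from $R$'' is robust under removing additional finite edge-sets. The key ingredient is the following observation: by definition of $D$, every vertex $v\notin D$ fails to edge-dominate $r$, so there is a finite $F_v\subset E(G)$ placing $v$ and some tail of $r$ in distinct components of $G\setminus F_v$.

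Suppose $P_1,\dots,P_{n-1}$ have been produced satisfying (1) and (2). Set $F:=\bigcup_{i<n}E(P_i)$ and $W:=\bigl(\bigcup_{i<n}V(P_i)\bigr)\setminus D$; both are finite because each $P_i$ is a finite path. For each $v\in W$ pick a finite separator $F_v$ as above, and form $F^*:=F\cup\bigcup_{v\in W}F_v$, which is still finite. By hypothesis on $R$, the set $F^*$ does not separate $r$ from $R$, so some component $C$ of $G\setminus F^*$ contains a tail of $r$ together with $R$ (if $R$ is a vertex) or a tail of $R$ (if $R$ is a ray).

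The crucial check is that $V(C)\cap W=\emptyset$: any $v\in W\cap V(C)$ would be connected inside $G\setminus F^*$ to that tail of $r$, hence also inside the supergraph $G\setminus F_v$, contradicting the choice of $F_v$. Pick a simple path $P_n\subset C$ joining a vertex on the tail of $r$ inside $C$ to $R$ (or to a vertex on the tail of $R$ inside $C$); such a path exists by connectedness of $C$. By construction $P_n$ uses no edge of $F$ and no vertex of $W$, so $P_n$ is edge-disjoint from each previous $P_i$ and $V(P_n)\cap V(P_i)\subseteq D$, as required.

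The main obstacle is precisely condition (2): if we only needed edge-disjointness, a direct edge-Menger-style argument would suffice, but the vertex-disjointness-outside-$D$ requirement forces us to track every previously used non-$D$ vertex. The fix is to encode each such vertex by a finite edge-cut via the $F_v$; this keeps $F^*$ finite, which is essential for invoking the hypothesis on $R$. Vertices already in $D$ need no special care, because (2) permits them to be shared.
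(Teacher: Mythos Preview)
Your proof is correct and follows essentially the same inductive scheme as the paper's: at each step, collect the finitely many non-$D$ vertices used so far, separate each from $r$ by a finite edge-set, and find the next path in the surviving component. The only differences are cosmetic: the paper separates each non-$D$ vertex from \emph{both} $r$ and $R$ (which is possible but unnecessary---separating from $r$ alone suffices, as you show), while you are more explicit in adding $\bigcup_{i<n}E(P_i)$ to $F^*$ to guarantee edge-disjointness, a point the paper leaves implicit.
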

\begin{proof}
	We first remark that, if $R$ is a vertex, then our main hypothesis actually says that $R$ dominates $r$. In this case, if another vertex $v \in V(G)$ cannot be separated from $R$ by finitely many edges, then $v \in D$ as well. This is because $v$, $R$ and a tail of $r$ belong to the same connected component of $G\setminus F$, for every finite set $F\subset E(G)$.

	Then, let $P_0$ be a path connecting $r$ and $s$. For instance, for some $n \geq 1$ suppose that we have so far defined finitely many edge-disjoint paths $P_0,P_1,\dots,P_{n-1}$ that connect $r$ and $R$. Moreover, we assume by induction that $V(P_i)\cap V(P_j)\subset D$ whenever $i\neq j$. But, by the observation made in the previous paragraph, any vertex from $V(G)\setminus D$ can be separated from $r$ and $R$ by a finite set of edges. In particular, there is a finite set $F\subset E(G)$ that separates every vertex of $\displaystyle \bigcup_{i=0}^{n-1}V(P_i)\setminus D$ from $r$ and $R$. By hypothesis, there is a path $P_n$ in $G\setminus F$ that connects the tail of $r$ to $R$ (if it is a vertex) or to its tail (if it is a ray). Moreover, $V(P_n)\cap V(P_i)\subset D$ for every $i < n$ by the choice of $F$.

	At the end of this recursive process, $\{P_n\}_{n \in \mathbb{N}}$ is the claimed family of paths.
\end{proof}

In particular, if $R = s$ is a ray that is edge-equivalent to $r$, then $\{P_n\}_{n\in\mathbb{N}}$ as in the above statement is a family of edge-disjoint paths connecting $r$ and $s$ whose elements intersect possibly at dominating vertices. Hence, in $H$, the family $\{\rho(P_n)\}_{n\in\mathbb{N}}$ turns out to be a family of vertex-disjoint paths connecting $\rho(r)$ to $\rho(s)$, since every expanded vertex is an endpoint of precisely one edge. This means that the map 

$$ \begin{array}{cccc}
	\Phi:  &  \Omega_E(G) & \to & \Omega(H)\\
	& [r]_E & \mapsto & [\rho(r)]
\end{array} $$
is well-defined. Proving Theorem \ref{volta}, our aim now is to verify that $\Phi$ is an homeomorphism. First, relying on Lemma \ref{familiadisjunta}, $\Psi$ is easily seen to be onto:

\begin{prop}\label{sobrejetora}
	The map $\Phi$ is surjective.
\end{prop}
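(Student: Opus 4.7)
The plan is to run a three-case analysis on the ray $r'$ of $H$, classified by how it interacts with the expanded cliques $\{K_v\}_{v\in D}$. In each case I would produce a ray $r$ of $G$ with $\Phi([r]_E)=[\rho(r)]=[r']$. The broad strategy is motivated by the observation recorded just before the statement: a ray $r'$ of $H$ is of the form $\rho(s)$ exactly when $|E(r')\cap E(K_v)|\leq 1$ for every $v\in D$. So the plan reduces arbitrary rays to this situation, up to end-equivalence in $H$.

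First, if a tail of $r'$ uses no edges from $\bigcup_{v\in D} E(K_v)$, then every edge of that tail is canonical, and since each expanded vertex has a unique canonical edge, the tail lies in $V(G)\setminus D$ after trimming its starting vertex if needed; it is therefore the image $\rho(s)$ of a ray $s$ of $G$, and $\Phi([s]_E)=[r']$ immediately. Second, if some $K_v$ (with $v\in D$) contains infinitely many vertices of $r'$, then I would pick a ray $r$ of $G$ edge-dominated by $v$ and invoke Lemma \ref{familiadisjunta} to obtain edge-disjoint $G$-paths $\{P_n\}_{n\in\mathbb{N}}$ from $v$ to $r$ meeting only in $D$. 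Under $\rho$ these become pairwise vertex-disjoint $H$-paths from distinct vertices of $K_v$ to vertices of $\rho(r)$; combined with the fact that $v$ has infinite degree (so $K_v$ is an infinite clique), any finite $S\subset V(H)$ leaves $K_v\setminus S$ as an infinite connected clique linked by some unblocked $\rho(P_n)$ to the tail of $\rho(r)$ and visited infinitely often by $r'$. No finite vertex set can then separate $r'$ from $\rho(r)$, giving $\Phi([r]_E)=[r']$.

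The remaining possibility is that $r'$ meets infinitely many distinct $K_v$'s, each at only finitely many vertices. Here I would compress $r'$ into a walk $W$ in $G$ by collapsing every maximal $K_v$-subpath of $r'$ to the single vertex $v$. Since each $u\notin D$ appears in $W$ at most once and each $v\in D$ only finitely, the subgraph $G_W\subseteq G$ traced out by $W$ is infinite, connected, and locally finite, hence contains a ray by K\"onig's infinity lemma. Moreover, because $W$ visits every vertex finitely, its tail escapes every finite set, so $W$ converges to a well-defined end of $G_W$; I would choose $r$ to represent that end. The equivalence $[\rho(r)]=[r']$ then follows by a bookkeeping version of the Case 2 argument: for a finite $S\subset V(H)$, let $\hat{S}\subset V(G)$ be its finite projection, track the tails of $r$ and $W$ in $G_W\setminus\hat{S}$, and use that each $K_v\setminus S$ remains a connected infinite clique to bridge back to $H\setminus S$.

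The hard part will be this last case, both in selecting $r$ inside $G_W$ to represent the correct end of $H$ (not some spurious end of $G_W$ that $r'$ does not head towards) and in the final component-tracking through the inflated cliques. I expect this to be largely routine once the three-case framework is in place; the infinite connectivity of each $K_v$ is the ingredient that makes the final equivalence step go through.
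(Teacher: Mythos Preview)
Your three-case plan is correct and your Case 2 matches the paper's exactly. The difference is in how you handle the situation where every $K_v$ is visited only finitely often: you split this into Cases 1 and 3 and, in the nontrivial Case 3, collapse $r'$ to a walk $W$ in $G$, extract a ray via K\"onig's Lemma in the locally finite trace graph $G_W$, and then lift back to $H$ with the component-tracking argument you outline. The paper treats this as a single case and stays entirely in $H$: since each $K_v$ is complete, one can shortcut the ray $r'$ so that each visit to a clique uses at most one clique edge, producing a ray $r''$ on the same vertex set (hence $[r'']=[r']$) with $|E(r'')\cap E(K_v)|\le 1$ for all $v\in D$, which is exactly the criterion for $r''=\rho(s)$. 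This bypasses K\"onig's Lemma and the lifting bookkeeping you anticipate as the hard part. Your route is sound, but the paper's compression-in-$H$ trick is shorter and more direct; conversely, your walk-collapse idea makes the underlying $G$-ray more explicit and would generalize to situations where the expanded pieces are merely connected rather than complete.
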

\begin{proof}
	Fix $r = x_0x_1x_2\dots$ a ray in $H$. First, consider the case in which $V(r)\cap V(K_v)$ is finite for every $v \in D$. We will recursively define a ray $r' = x_0'x_1'x_2'\dots$ of $H$ as follows:
	\begin{itemize}
		\item We declare $x_0' = x_0$. If $x_0\in V(K_v)$ for some $v\in D$, we also define $x_1' = x_{i_1}$, in which $i_1 = \max\{j\in\mathbb{N} : x_j \in V(K_v)\}$. Since $K_v$ is a complete graph, $x_0'$ and $x_1'$ are indeed neighbors;
		\item We suppose that $x_0',x_1',x_2',\dots,x_n'$ are already defined for some $n \in\mathbb{N}$. By induction, we can write $x_k' = x_{i_k}$ for every $1 \leq k \leq n$, for certain indices $i_1 < i_2 < i_3 < \dots < i_n$. Moreover, we assume that, if $x_n' \in V(K_v)$ for some $v \in D$, then $i_n = \max\{j\in\mathbb{N} : x_j \in V(K_v)\}$. We hence set $x_{n+1}' = x_{i_n+1}$. In addition, if $x_{i_n+1}\in V(K_v)$ for some $v \in D$, we also set $x_{n+2}' = x_{i_{n+2}}$, where $i_{n+2} = \max\{j\in\mathbb{N} : x_j \in V(K_v)\}$. 
	\end{itemize}
	Defined this way, the set of vertices of $r'$ is actually an (infinite) subset of $V(r)$, from where we have $[r'] = [r]$. By construction, $|E(r) \cap E(K_v)| \leq 1$ for every $v \in D$, so that $r' = \rho(s)$ for some ray $s$ in $G$. Therefore, $\Phi([s]_E) = [r'] = [r]$.

	Now, suppose that $V(r) \cap V(K_v)$ is infinite for some $v \in D$. By definition of $D$, there is $s$ a ray in $G$ that $v$ edge-dominates. Then, there is $\{P_n\}_{n\in \mathbb{N}}$ an infinite family of edge-disjoint paths connecting $v$ and $s$. By Lemma \ref{familiadisjunta}, we can assume that $P_n \cap P_m \subset D$ if $n \neq m$. Since expanded vertices are endpoints of precisely one canonical edge, $\{\rho(P_n)\}_{n\in\mathbb{N}}$ is a family of vertex-disjoint paths that verifies the equivalence between $\rho(s)$ and a ray $r_v$ composed by expanded vertices of $K_v$. On the other hand, since $V(r) \cap V(K_v)$ is infinite and $K_v$ is a complete graph, we have $r \sim r_v$. Hence, $\Phi([s]_E) = [\rho(s)] = [r_v] = [r]$.   
	
\end{proof}

When showing how $\Phi$ is injective, we can actually conclude that distinct edge-ends of $G$ are mapped to ends of $H$ that are not infinitely edge-connected: 

\begin{prop}\label{injecao}
	If $r$ and $s$ are rays of $G$ such that $[r]_E \neq [s]_E$, then $\rho(r)$ and $\rho(s)$ are not edge-equivalent in $H$. In particular, $[\rho(r)]\neq [\rho(s)]$.  
\end{prop}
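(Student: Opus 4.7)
The plan is to exhibit an explicit finite edge-separator for $\rho(r)$ and $\rho(s)$ in $H$, starting from one for $r$ and $s$ in $G$. Since $[r]_E\neq [s]_E$, there is a finite $F\subset E(G)$ separating the tails of $r$ and $s$ in $G$. The natural candidate in $H$ is $F' = \rho(F)$, which is finite because $\rho$ is injective. The goal then reduces to showing that the tails of $\rho(r)$ and $\rho(s)$ sit in different connected components of $H\setminus F'$, which already gives $[\rho(r)]_E\neq [\rho(s)]_E$ (and a fortiori $[\rho(r)]\neq [\rho(s)]$, since equivalent ends are also edge-equivalent).

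To prove the claim, I would pick tails of $\rho(r)$ and $\rho(s)$ whose ``projections'' to $G$ lie entirely in the components $C_r$ and $C_s$ of $G\setminus F$ containing tails of $r$ and $s$ respectively. Then, for contradiction, suppose there is a walk $W$ in $H\setminus F'$ joining a vertex of the tail of $\rho(r)$ to a vertex of the tail of $\rho(s)$. I would construct a projection $\pi:V(H)\to V(G)$ by sending every expanded vertex of $K_v$ back to $v$ and fixing vertices of $V(G)\setminus D$. Applying $\pi$ to $W$ edge by edge, each $K_v$-edge becomes a loop at $v$ (and is discarded), while each canonical edge $\rho(e)$ is replaced by its unique preimage $e\in E(G)$. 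Because $W$ avoids $F'=\rho(F)$ and $\rho$ is injective, the resulting walk in $G$ uses no edge of $F$; its endpoints, meanwhile, are vertices of the tails of $r$ and $s$ inside $C_r$ and $C_s$. This contradicts the choice of $F$, so $F'$ indeed separates the two tails in $H$.

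The main conceptual step, and the one I expect to require the most care, is verifying that the ``projection'' $\pi(W)$ really is a walk in $G\setminus F$. One must check that consecutive edges of $W$ project to edges (or loops) sharing endpoints in $G$: this is essentially built into the definition of $\rho$, because the canonical edge of an expanded vertex $x\in V(K_v)$ is precisely the image of the $G$-edge at $v$ that $x$ represents, and movements inside $K_v$ stay over $v$. Once this bookkeeping is done, the contradiction is immediate and the proposition follows.
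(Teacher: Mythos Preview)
Your argument is correct and essentially the same as the paper's: both show that the finite set $\rho(F)\subset E(H)$ separates $\rho(r)$ from $\rho(s)$ by projecting a hypothetical connecting path in $H\setminus\rho(F)$ down to a walk in $G\setminus F$, yielding a contradiction. The only cosmetic difference is that the paper picks a path of minimum length (forcing it to have the form $\rho(Q)$ for some path $Q$ in $G$), whereas you achieve the same projection via the map $\pi$ on an arbitrary walk.
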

\begin{proof}
	Fix $r$ and $s$ two rays that can be separated by a finite set $F\subset E(G)$. In particular, $\rho(F)$ is also finite. For instance, suppose that, in $H\setminus \rho(F)$, there is a path $P = x_0x_1\dots x_n$ connecting the tails of $\rho(s)$ and $\rho(r)$. Suppose that $|V(P)|$ is minimum with that property. Then, given distinct $x_i,x_j \in K_v$ for some $v\in D$, we must have $j = i+1$ if $i < j$, because the subpath $P' = x_0x_1\dots x_ix_j\dots x_n$ is well defined and also connects $\rho(s)$ to $\rho(r)$. Hence, if $x_ix_{i+1}\in E(K_v)$, the edges $x_{i-1}x_i$ and $x_{i+1}x_{i+2}$, if exist, are canonical. In other words, $P = \rho(Q)$ for some path $Q$ that connects $r$ and $s$ in $G$. However, this contradicts the fact that $F$ separates the rays $r$ and $s$, because $E(P)\subset E(H)\setminus \rho(F)$ and, thus, $E(Q)\subset E(G)\setminus F$. Therefore, $\rho(F)$ separates $\rho(r)$ and $\rho(s)$.  
\end{proof}

\begin{corol}
	Every vertex of $H$ edge-dominates at most one end of $\Omega(H)$.
\end{corol}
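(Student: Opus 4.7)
The plan is a short argument by contradiction: assume some vertex $x\in V(H)$ edge-dominates two distinct ends $[r]\neq[s]$ of $H$, and derive a contradiction to Proposition \ref{injecao}.

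First I would argue that the assumption already forces $r$ and $s$ themselves to be infinitely edge-connected in $H$. Indeed, fix any finite $F\subset E(H)$; by edge-domination, $x$ lies in the same connected component of $H\setminus F$ as a tail of $r$ and also as a tail of $s$. Hence these two tails share a component of $H\setminus F$, so $F$ does not separate $r$ and $s$. Since this holds for every finite $F\subset E(H)$, we conclude $r\sim_E s$ in $H$.

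Next I would invoke the surjectivity of $\Phi$ (Proposition \ref{sobrejetora}) to pick rays $r',s'$ in $G$ with $[r]=[\rho(r')]$ and $[s]=[\rho(s')]$. Because $\Phi$ is well-defined, $[r]\neq[s]$ forces $[r']_E\neq[s']_E$, and then Proposition \ref{injecao} guarantees that $\rho(r')$ and $\rho(s')$ are \emph{not} infinitely edge-connected in $H$. However, since $\sim_E$ is coarser than $\sim$, from $r\sim \rho(r')$ and $s\sim \rho(s')$ we obtain $r\sim_E\rho(r')$ and $s\sim_E\rho(s')$. Combined with $r\sim_E s$ from the previous paragraph, transitivity of $\sim_E$ yields $\rho(r')\sim_E\rho(s')$, contradicting Proposition \ref{injecao}.

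The proof is essentially a short chase through the already-established propositions, so I do not anticipate any genuine obstacle. The only point that requires care is keeping the distinction between $\sim$ and $\sim_E$ clean, together with the fact that edge-domination is defined via separation by finite \emph{edge} sets — exactly the notion needed to invoke Proposition \ref{injecao}.
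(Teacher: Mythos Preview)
Your proposal is correct and follows essentially the same route as the paper: assume a vertex edge-dominates two distinct ends, deduce that their representatives are edge-equivalent in $H$, and then contradict Proposition~\ref{injecao} via the surjectivity of $\Phi$. Your connected-component argument for $r\sim_E s$ is in fact a slight streamlining of the paper's path-concatenation step, and you spell out the final contradiction (passing through $r',s'$ and transitivity of $\sim_E$) more explicitly than the paper, which simply says ``Since $\Phi$ is surjective, this contradicts Proposition~\ref{injecao}.''
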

\begin{proof}
	Fix $v \in V(H)$ a vertex that edge-dominates two distinct ends of $H$. Fix $r$ and $s$ representatives for those ends. In particular, if $F\subset E(H)$ is finite, there is $P$ a path in $H\setminus F$ connecting $v$ to a tail of $r$. Then, $F' = F\cup E(P)$ is also a finite set of edges, so that there is a path $P'$ in $H\setminus F'$ connecting $v$ to a tail of $s$. Therefore, by concatenating $P$ and $P'$, we obtain a path in $H\setminus F$ connecting the tails of $r$ and $s$. This proves that $r$ and $s$ are infinitely edge-connected, although belong to different ends of $H$. Since $\Phi$ is surjective, this contradicts Proposition \ref{injecao}.
\end{proof}

Proving Theorem \ref{volta}, then, we finish this section by showing that $\Phi$ is open and continuous:

\begin{prop}\label{phi}
	$\Phi$ is an homeomorphism.
\end{prop}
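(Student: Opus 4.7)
The plan is to use Propositions~\ref{sobrejetora} and~\ref{injecao}, which already establish that $\Phi$ is a bijection, and then to verify openness and continuity separately by translating basic neighborhoods between $\Omega_E(G)$ and $\Omega(H)$.

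For openness, given a basic open $\Omega_E(F,[r]_E)$, my candidate is to take $S$ equal to the set of endpoints in $V(H)$ of the finite edge set $\rho(F)$; this $S$ is finite. I would then show $\Omega(S,[\rho(r)])\subseteq \Phi(\Omega_E(F,[r]_E))$ as follows. If $S$ does not separate $\rho(r)$ from $\rho(s)$ in $H$, then any witnessing path in $H\setminus S$ automatically avoids every edge of $\rho(F)$, since each such edge has both endpoints in $S$. The minimality argument from the proof of Proposition~\ref{injecao} then implies that this path has the form $\rho(Q)$ for some path $Q$ in $G\setminus F$ connecting $r$ and $s$, so $F$ does not separate $r$ and $s$, and thus $[s]_E\in\Omega_E(F,[r]_E)$.

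For continuity, the harder direction, given a basic neighborhood $\Omega(S,[\rho(r)])$ with $S\subset V(H)$ finite, the plan is to build a finite $F\subset E(G)$ in two pieces. For each expanded vertex $x\in S$, I include its unique canonical edge in $F$; whenever $Q\subset G\setminus F$, the translated path $\rho(Q)$ cannot touch $x$. For each non-expanded vertex $u\in S$ --- which necessarily lies in $V(G)\setminus D$ and hence edge-dominates no ray --- I choose a finite $F_u\subset E(G)$ separating $u$ from $r$ and add it to $F$. Then for any $Q$ in $G\setminus F$ from a tail of $r$ to a tail of $s$, each $F_u\subseteq F$ prevents $Q$ from reaching $u$, while the canonical-edge constraints prevent $\rho(Q)$ from visiting any expanded vertex of $S$.

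The delicate point, and where I expect the bulk of the work to be, is matching the endpoints of $\rho(Q)$ with genuine vertices of $\rho(r)$ and $\rho(s)$. When $Q$ starts at some $v_j\in V(r)\cap D$, the first vertex of $\rho(Q)$ sits in $K_{v_j}$ but need not coincide with either of the two vertices of $K_{v_j}$ that $\rho(r)$ itself uses. I would fix this by choosing $j$ large enough that the edges $v_{j-1}v_j$ and $v_jv_{j+1}$ of $r$ lie outside $F$; their corresponding canonical vertices $\rho_{v_j}(v_{j-1})$ and $\rho_{v_j}(v_{j+1})$ then automatically lie outside $S$, and since $K_{v_j}$ is complete, a single edge inside $H\setminus S$ extends $\rho(Q)$ until it meets $\rho(r)$. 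The symmetric adjustment on the $\rho(s)$-side then produces a path in $H\setminus S$ between the tails of $\rho(r)$ and $\rho(s)$, yielding $\Phi([s]_E)\in\Omega(S,[\rho(r)])$.
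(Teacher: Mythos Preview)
Your proposal is correct and follows essentially the same route as the paper: for openness you take $S$ to be the endpoints in $V(H)$ of $\rho(F)$ and invoke the minimality argument from Proposition~\ref{injecao}, and for continuity you build $F$ from canonical edges (for expanded vertices of $S$) together with finite edge-separators $F_u$ (for non-expanded $u\in S$), exactly as the paper does.

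Your treatment of the endpoint-matching issue in the continuity direction actually goes \emph{beyond} what the paper writes. The paper simply asserts that ``$\rho(P)$ is a path connecting $\rho(s)$ and $\rho(s')$ in $G\setminus S$'' without addressing the point you raise: when $P$ starts at $v_j\in D$, the first vertex of $\rho(P)$ is $\rho_{v_j}(w_1)$ for the second vertex $w_1$ of $P$, which need not lie on $\rho(r)$. Your fix---choosing $j$ large enough that the incident edges of $r$ at $v_j$ avoid $F$, so that the corresponding vertices of $K_{v_j}$ avoid $S$, and then bridging inside the clique---is correct and fills this gap cleanly. So the ``delicate point'' you anticipated is real, but it is a small patch rather than the bulk of the work; the paper suppresses it entirely.
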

\begin{proof}
	We will first verify that $\Phi$ is continuous. To this aim, let $\Omega(S,[r])$ be a basic open set in the end space of $H$, for some $[r]\in \Omega(H)$ and some finite $S\subset V(H)$. Since $\Phi$ is a bijection, we can write the representative $r$ as $r = \rho(s)$ for some ray $s$ in $G$. If a fixed vertex $u\in S$ belongs to $D$, let $F_ u = \{f_u\}$ denote the singleton set containing its canonical edge $f_u$. If not, $u$ does not edge-dominate the ray $s$, so that there is a finite set $F_u\subset E(G)$ that separates $s$ and $u$. Hence, $F = \displaystyle \bigcup_{u\in S}F_u$ is a finite set of edges. We then claim that the basic open set $\Omega_E(F,[s]_E)$ for the edge-end space of $G$ is contained in $\Phi^{-1}(\Omega(S,[r]))$. In fact, if $F$ does not separate $s$ and a ray $s'$, there is $P$ a path connecting $s$ and $s'$ in $G\setminus F$. By the choice of $F$, therefore, $\rho(P)$ is a path connecting $\rho(s)$ and $\rho(s')$ in $G\setminus S$. In other words, $S$ does not separate $\rho(s)$ and $\rho(s')$, proving that $\Phi(\Omega_E(F,[s]_E))\subset \Omega(S,[r])$.

	Conversely, in order to show that $\Phi$ is an open map, let $\Omega_E(F,[s]_E)$ be a basic open set containing an edge-end $[s]_E \in \Omega_E(G)$, for some finite $F\subset E(G)$. Hence, it is also finite the set $S = \{x\in V(H) : x \text{ is endpoint of }\rho(e) \text{ for some }e \in F\}$. Then, it is enough to verify that $\Omega(S,[\rho(s)])\subset \Phi(\Omega(F,[s]_E))$. To this aim, again by the fact that $\Phi$ is surjective, an element $[r']\in \Omega(S,[\rho(s)])$ has a representative of the form $r' = \rho(s')$ for some ray $s'$ of $G$. Since $S$ does not separate $\rho(s')$ and $\rho(s)$, there is a path $P$ connecting the tails of these two rays in $H\setminus S$. As in Proposition \ref{injecao}, if we consider $P$ to have as few vertices as possible, we can write $P = \rho(Q)$ for some path $Q$ in $G\setminus F$ connecting $s$ and $s'$. Hence, $F$ does not separate $s$ and $s'$, so that $[s']_E\in \Omega_E(F,[s]_E)$ and, therefore, $[r'] = [\rho(s')]\in \Phi(\Omega_E(F,[s]_E))$.
\end{proof}

\section{Construction of edge-end spaces}\label{sec:construction}

\paragraph{}

Throughout this section, we will prove the converse of Theorem \ref{volta}. Although this is not needed to show that $\Omega_E$ is a proper subclass of $\Omega$, Theorem \ref{volta} and the below result combined give a graph-theoretic description of edge-end spaces:

\begin{thm}\label{ida}
	Let $G$ be a graph in which every vertex edge-dominates at most one end of $\Omega(G)$. Then, there is a graph $H$ such that $\Omega_E(H)\simeq \Omega(G)$.
\end{thm}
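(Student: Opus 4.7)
The plan is to construct $H$ from $G$ by essentially inverting the clique-expansion of Theorem \ref{volta}: for each vertex $v \in V(G)$ that edge-dominates some end of $G$ (which is unique by hypothesis), I would replace $v$ with a new ray $R_v = v\, v_1\, v_2\, v_3 \dots$ along which the infinite family of edge-disjoint connections between $v$ and its dominated end gets spread out. The intuition is that, after this modification, $R_v$ becomes edge-absorbed into the end that $v$ once edge-dominated, while the original vertex $v$ is edge-separable from that end by the single new edge $v v_1$. This should make edge-separators in $H$ play the role that vertex-separators played in $G$.

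More precisely, for each edge-dominating $v \in V(G)$ I would fix a representative ray $r_v$ of the unique end that $v$ edge-dominates, together with an infinite family $(P^v_i)_{i \in \mathbb{N}}$ of pairwise edge-disjoint $v$--$r_v$ paths in $G$ having distinct first edges $e^v_i = v u^v_i$. The graph $H$ is obtained from $G$ by adding the new vertices $v_i$ and the ray edges $v v_1$, $v_i v_{i+1}$ for $i \geq 1$, while replacing each $e^v_i$ with the edge $v_i u^v_i$. This yields an inclusion $\rho : E(G) \hookrightarrow E(H)$ that extends naturally to rays, re-routing each ray of $G$ through the $R_v$'s whenever it traverses an edge-dominating vertex. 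The desired homeomorphism is then $\Phi : \Omega(G) \to \Omega_E(H)$, $[r] \mapsto [\rho(r)]_E$. Well-definedness and bijectivity can be established in parallel with Section \ref{representacaoarestas}: surjectivity uses that each $R_v$ is edge-equivalent in $H$ to $\rho(r_v)$ via $\{\rho(P^v_i)\}_{i \in \mathbb{N}}$, while injectivity uses that distinct ends of $G$ are vertex-separable by some finite $S \subseteq V(G)$ and converts this into a finite edge-separator in $H$ by the recipe below.

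The main obstacle will be establishing the topological correspondence, in particular that the edge-end topology of $H$ is no coarser than the end topology of $G$. Given a basic open $\Omega(S,[r])$ with $S \subseteq V(G)$ finite, I would construct $F \subseteq E(H)$ finite as follows: for each $v \in S$ that does not edge-dominate $[r]$ in $G$, fix a finite edge-cut $F_v \subseteq E(G)$ separating $v$ from $r$ and include $\rho(F_v)$ in $F$; for each $v \in S$ that does edge-dominate $[r]$ (in which case $[r] = [r_v]$ by the uniqueness hypothesis), include the single edge $v v_1$ in $F$. Verifying $\Phi^{-1}(\Omega_E(F, \Phi([r]))) \subseteq \Omega(S, [r])$ amounts to checking that any ray $s$ whose image $\rho(s)$ reaches $\rho(r)$ in $H \setminus F$ must already reach $r$ in $G \setminus S$, which requires a careful case analysis on which side of each $v \in S$ the ray $s$ lies. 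The converse inclusion is more routine, taking as $S$ the set of $V(G)$-vertices naturally associated with the edges of a given finite $F \subseteq E(H)$.
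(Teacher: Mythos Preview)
Your construction has a genuine gap: the single edge $v v_1$ need not separate $v$ from the end $[r_v]$ in $H$, because you only relocate the first edges of \emph{one} chosen family of $v$--$r_v$ paths, while $v$ may retain infinitely many other edge-disjoint routes to that end. Concretely, let $G$ consist of a ray $r = x_0 x_1 x_2 \dots$, a vertex $v$ adjacent to every $x_i$, auxiliary vertices $y_i$ with $y_i$ adjacent to $v$ and to $x_i$, and a second ray $s = z_0 z_1 \dots$ with $z_0$ adjacent to $v$. Here $v$ edge-dominates only $[r]$, and $S = \{v\}$ separates $[r]$ from $[s]$. If you choose $P^v_i = v x_i$, your $H$ moves each edge $v x_i$ to $v_i x_i$ but leaves all edges $v y_i$ and $v z_0$ at $v$. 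Your recipe for $\Omega(S,[r])$ gives $F = \{v v_1\}$, yet in $H \setminus F$ the path $x_0\, y_0\, v\, z_0$ still connects $\rho(r)$ to $\rho(s)$, so $F$ fails to separate their edge-ends and $\Omega_E(F,\Phi([r]))$ is not contained in $\Phi(\Omega(S,[r]))$.

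The paper avoids this by splitting each edge-dominating vertex $v$ into just two vertices $v$ and $v'$ joined by an edge, with the bipartition of $N(v)$ governed not by an arbitrary path family but by membership in a fixed \emph{envelope} $E_\varepsilon$ of the dominated end $\varepsilon$ (Lemma~\ref{propriedadesenvelope}). The crucial property is that $E_\varepsilon$ has finite adhesion: every component of $G \setminus E_\varepsilon$ has only finitely many neighbours in $E_\varepsilon$. This is precisely what makes the openness argument go through, and it is the non-trivial structural ingredient your ray construction lacks. Any repair of your approach would need an analogue of this finite-adhesion control over \emph{all} edges at $v$, not just those in one witnessing family.
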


We start the construction of $H$ by \textit{enveloping} pre-described sets of ends of $G$, a procedure recently developed by Kurkofka and Pitz in \cite{representacao}. In our setting, however, we will need only the restricted version below:

\begin{lemma}[\cite{representacao}, Theorem 3.2]\label{propriedadesenvelope}
	Let $\varepsilon \in \Omega(G)$ be an end of $G$ and fix $\mathcal{R}_{\varepsilon}\subset \varepsilon$ a maximal family of pairwise (vertex-)disjoint rays. Denote by $D_{\varepsilon}$ the set of vertices that dominate $\varepsilon$. Then, 
	\begin{equation}\label{envelope}
		E_{\varepsilon} = D_{\varepsilon} \cup \displaystyle \bigcup_{r \in \mathcal{R}_{\varepsilon}}V(r)
	\end{equation} 
	has \textit{finite adesion}, i.e., for each connected component $C$ of $G\setminus E_{\varepsilon}$, the set $N(C) = \{v \in E_{\omega}: v \text{ has a neighbor in }C\}$ is finite. Moreover, if $r$ is a ray such that $V(r)\cap E_{\varepsilon}$ is infinite, then $[r] = \varepsilon$.
\end{lemma}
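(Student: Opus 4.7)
The strategy for the finite-adhesion claim is to argue by contradiction against the maximality of $\mathcal{R}_\varepsilon$. Suppose some component $C$ of $G \setminus E_\varepsilon$ has $N(C)$ infinite. Split into cases: in the first, some $c \in C$ has infinitely many neighbors in $E_\varepsilon$, and I claim $c$ must then dominate $\varepsilon$, contradicting $c \in C$. Indeed, for any finite $S \not\ni c$, if $c$ has infinitely many neighbors in $D_\varepsilon$ we pick one in $D_\varepsilon \setminus S$ (automatically in the $\varepsilon$-component of $G \setminus S$); otherwise $c$ has infinitely many neighbors on rays of $\mathcal{R}_\varepsilon$, and since these rays are pairwise vertex-disjoint, cofinitely many of them avoid $S$ entirely, so the corresponding neighbor of $c$ (and hence $c$) lies in the $\varepsilon$-component via the host ray's tail.

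In the complementary case, every $c \in C$ has finitely many $E_\varepsilon$-neighbors. The plan is to construct a ray $r^\natural \subseteq C$ that contradicts maximality. First verify that $C$ is locally finite: if some $c_0 \in C$ had infinite $C$-degree, then $c_0$'s $C$-neighbors each carry finitely many $E_\varepsilon$-neighbors, and combined with $N(C)$ infinite this yields infinitely many internally-disjoint paths of length two from $c_0$ into $E_\varepsilon$; the pairwise-disjointness argument from the first case promotes these into $c_0$ dominating $\varepsilon$, contradicting $c_0 \in C$. Given local finiteness, König's Lemma extracts a ray $r^\natural \subseteq C$, which can be arranged to pass through infinitely many $c_k \in C$ each having a chosen external neighbor $w_k \in N(C)$ (select the $c_k$ among the infinitely many $C$-vertices with external neighbors and trace the ray along a walk $Q_1 Q_2 \cdots$ built from paths $Q_k \subseteq C$ joining $c_k$ to $c_{k+1}$). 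Since $r^\natural \subseteq C$ is disjoint from $\mathcal{R}_\varepsilon$, it remains to check $r^\natural \in \varepsilon$: for any finite $S$, cofinitely many $w_k$ avoid $S$ and lie in the $\varepsilon$-component of $G \setminus S$ by reapplying the pairwise-disjointness argument, and the edges $w_k c_k$ transfer this membership to $r^\natural$'s vertices so that connectedness of the tail completes the contradiction.

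For the moreover clause, let $r$ be a ray with $V(r) \cap E_\varepsilon$ infinite. If $|V(r) \cap V(r_0)| = \infty$ for some $r_0 \in \mathcal{R}_\varepsilon$, each shared vertex is a trivial disjoint path, so $r \sim r_0$ and $[r] = \varepsilon$. Otherwise $V(r) \cap V(r_j)$ is finite for every $j$, so $V(r) \cap E_\varepsilon$ being infinite entails either $V(r) \cap D_\varepsilon$ infinite or $V(r)$ meeting infinitely many distinct $r_j$. In both sub-cases, for any finite $S$ the tail of $r$ avoiding $S$ contains infinitely many vertices of $E_\varepsilon \setminus S$ lying in the $\varepsilon$-component of $G \setminus S$ by the pairwise-disjointness argument (dominating vertices by definition; a vertex on $r_j$ with $V(r_j) \cap S = \emptyset$ via the ray's tail, which covers cofinitely many of the visited rays). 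Connectedness of the tail then places all of it in the $\varepsilon$-component, giving $[r] = \varepsilon$. I expect the local-finiteness reduction and subsequent walk-to-ray extraction of $r^\natural$ to be the main technical obstacle, requiring care to ensure the extracted ray meets the chosen $c_k$ infinitely often so that the transfer-through-$w_k c_k$ step really does witness $r^\natural \in \varepsilon$.
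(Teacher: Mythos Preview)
Your Case~1 and the ``moreover'' clause are fine, but Case~2 contains a genuine error: the claim that $C$ must be locally finite is false. You argue that if $c_0$ had infinite $C$-degree, then its $C$-neighbours, together with $N(C)$ being infinite, produce infinitely many length-two paths from $c_0$ into $E_\varepsilon$. This does not follow: the infinitely many $C$-neighbours of $c_0$ need not have any neighbours in $E_\varepsilon$ at all --- the vertices of $C$ that witness $N(C)$ infinite may lie in a completely different part of $C$, far from $c_0$. Concretely, $C$ could contain an infinite star at $c_0$ whose leaves have no external neighbours, while a separate path $c_0 d_1 d_2 d_3 \dots$ carries all the edges to $E_\varepsilon$. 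Since local finiteness fails, your appeal to K\"onig's Lemma collapses, and with it the extraction of $r^\natural$.

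The paper avoids this by invoking the Star--Comb Lemma directly on the connected graph $\hat{C}$ obtained from $C$ by attaching $N(C)$ via one edge per vertex of $N(C)$. That lemma gives the correct dichotomy without any finiteness hypothesis: either a star centred at some $v\in C$ with teeth in $N(C)$ (whence $v$ dominates $\varepsilon$, your Case~1 in a stronger form allowing paths rather than single edges), or a comb with spine a ray $s\subseteq C$ and teeth in $N(C)$ (whence $[s]=\varepsilon$, contradicting maximality of~$\mathcal{R}_\varepsilon$). Your case split ``some $c$ has infinitely many $E_\varepsilon$-neighbours / no $c$ does'' is a legitimate partition, but it does not line up with the star/comb dichotomy, and the second half cannot be reduced to a locally finite situation as you attempt. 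Replacing your Case~2 argument with a direct application of Star--Comb (or a correct hand-rolled version of it) fixes the proof.
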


\begin{proof}[Revisited proof from \cite{representacao}]
	Fix a connected component $C$ of $G\setminus E_{\varepsilon}$. Let $\hat{C}$ be a subgraph of $G$ containing $C$, $N(C)$ and precisely one edge connecting each vertex of $N(C)$ to (some vertex of) $C$. In particular, $\hat{C}$ is also a connected subgraph of $G$. For instance, suppose that $N(C)$ is infinite, so that, by the Star-Comb Lemma (see \cite{livrodiestel}), one of the following assertions holds:
	\begin{itemize}
		\item There is a \textit{star} in $\hat{C}$ centered at some vertex $v\in V(C)$ with teethes in $N(C)$. In other words, for every finite set $S\subset V(G)\setminus \{v\}$, there is a path in $\hat{C}\setminus S$ connecting $v$ to a vertex of $N(C)$;
		\item There is a \textit{comb} in $\hat{C}$ with teethes in $N(C)$. More precisely, there is a ray $s$ in $C$ such that, for every finite set $S\subset V(G)$, there is a path connecting a vertex of $N(C)$ to the tail of $s$ in $\hat{C}\setminus S$.
	\end{itemize}

	Suppose that the former case occurs and let $v \in V(C)$ be the center of the mentioned star. Fix a finite set $S\subset V(G)\setminus \{v\}$. Since the family $\mathcal{R}_{\varepsilon}$ is composed by pairwise disjoint rays, $S$ meets only finitely many of them. Hence, by adding initial segments of these rays, there is a finite set $S'\subset V(G)\setminus \{v\}$ containing $S$ such that $r\setminus S'$ is the tail of $r$ for every $r\in \mathcal{R}_{\varepsilon}$. As in the first item above, however, there is in $V(G)\setminus S'$ a path $P$ connecting $v$ to a vertex $u$ of $N(C)$. If $u \in V(r)$ for some $r\in \mathcal{R}_{\varepsilon}$, then $u$ belongs to a tail of $r$ by our choice of $S'$. If $u\in D_{\varepsilon}$, there is a path $P$ in $G\setminus S'$ connecting $u$ to a tail of a ray whose end is $\varepsilon$, since  $u$ dominates $\varepsilon$. In any case, $S$ does not separate $v$ and $\varepsilon$, contradicting the fact that $D_{\varepsilon}\cap V(C) = \emptyset$.

	Suppose now that there is a ray $s$ in $C$ as in the second item above. Again, let $S\subset V(G)$ be any finite set and, as argued in the previous paragraph, assume that $r\setminus S$ is the tail of $r$ in $G\setminus S$ for every $r\in \mathcal{R}_{\varepsilon}$. By our main hypothesis over $s$, there is a path in $G\setminus S$ that connects its tail to a vertex $u\in N(C)$. If $u\in r$ to some $r\in \mathcal{R}_{\varepsilon}$, then $u$ belongs to a tail of $s$. If not, $u \in D_{\varepsilon}$, so that there is a path in $G\setminus S$ connecting $u$ to a tail of some ray whose end is $\varepsilon$. In any case, $S$ does not separate $s$ from $\varepsilon$, so that $[s] = \varepsilon$. However, since $s$ is contained in $C$, this contradicts the maximality of the family $\mathcal{R}_{\varepsilon}$.

	Finally, let $r$ be a ray such that $V(r)\cap E_{\varepsilon}$ is infinite. Then, if $S\subset V(G)$ is finite, there is $v\in V(r)\cap E_{\varepsilon}\setminus S$. If $v$ dominates $\varepsilon$, then there is a path in $G\setminus S$ connecting $v$ to $r'$ for some $r'\in \varepsilon$. If not, then $v$ itself belongs to a ray $r'\in \varepsilon$. In any case, $S$ does not separate $r$ from representatives of $\varepsilon$, proving that $[r] = \varepsilon$.
\end{proof}

Then, for each end $\varepsilon\in \Omega(G)$, we will fix $E_{\varepsilon}$ as defined by (\ref{envelope}) and will call it an \textbf{envelope} for $\varepsilon$. By our main hypothesis over $G$, each vertex $v \in V(G)$ edge-dominates at most one such end $\varepsilon \in \Omega(G)$. If this is the case, we define a new vertex $v'$ and a bipartition $\tau_v : N(v) \to \{v,v'\}$ according to the following rules:
\begin{equation*}
	\tau_v(u) = \left\{\begin{tabular}{ll}
		$v$, & if $u\in E_{\varepsilon}$; \\
		$v'$, & if $u\in V(G)\setminus E_{\varepsilon}$.
	\end{tabular}\right.
\end{equation*}

From now on in this section, $D\subset V(G)$ will denote the set of vertices that edge-dominates an end of $G$. Then, we consider $V(G)\cup \{v' : v \in D\}$ as the vertex set of the claimed graph $H$. Its edge set is $\{vv': v \in D\}\cup \tau(E(G))$, in which $\tau : E(G)\hookrightarrow E(H)$ is the injective map given by:

\begin{equation*}
	\tau(uv) = \left\{\begin{tabular}{ll}
		$uv$, & if $u,v\in V(G)\setminus D$; \\
		$u\tau_v(u)$, & if $u\in V(G)\setminus D$ and $v \in D$; \\
		$\tau_u(v)\tau_v(u)$, & if $u,v\in D$.
	\end{tabular}\right.
\end{equation*}

In other words, $H$ is built from $G$ after duplicating vertices of $D$, rearranging the edges of $G$ according to $\tau$ and defining an edge between a vertex and its copy. For instance, $G$ may be re-obtained from $H$ after the contraction of each edge from $\{vv': v \in D\}$ to a single vertex again. More precisely, the projection map $\pi : V(H) \to V(G)$ given by $\pi(v)=v$ for every $v\in V(G)$ and $\pi(v')=v$ for every $v\in D$ is clearly a bijection verifying $\pi(u)\pi(v)\in E(G)$ or $\pi(u)=\pi(v)$ whenever $uv$ is an edge of $H$. In particular, $K:=(\pi(K'),\tau^{-1}(E(K')))$ defines a connected subgraph of $G$ whenever $K'$ is a connected set of vertices in $H$, from where we highlight the following observation:

\begin{remark}\label{remark33}
    Fix a connected subgraph $K'$ of $H$ and let $K$ denote the subgraph of $G$ described by the edges of $\tau^{-1}(E(K'))$ and their end vertices. Since $\pi(K')$ comprises precisely these endpoints, $K$ is connected.
\end{remark}

Similarly, by tracking the edges of $G$, the map $\tau$ also allows us to include paths and rays of $G$ into $H$. For example, given a ray $r = v_0v_1v_2\dots $ in $G$, we denote by $\tau(r)$ the ray in $H$ whose presentation by its \textit{edges} is $\tau(v_0v_1)s_1\tau(v_1v_2)s_2\tau(v_2v_3)s_3\dots $, in which, for $i \geq 1$:
\begin{itemize}
	\item $s_i = \emptyset$ is the empty edge if $v_i\notin D$ or $v_i\in D$ and $\tau_{v_{i}}(v_{i+1}) = \tau_{v_i}(v_{i-1})$;
	\item $s_i = v_iv_i'$ if $v_i \in D$ and $\tau_{v_i}(v_{i+1})\neq \tau_{v_i}(v_{i-1})$.
\end{itemize}

Regarding the above notation, a path $P = v_0v_1v_2\dots v_n$ in $G$ can also be seen within $H$, via the path $\tau(P)$ whose edges are given by $$\tau(v_0v_1)s_1\tau(v_1v_2)s_2\tau(v_2v_3)s_3\dots s_{n-1}\tau(v_{n-1}v_n).$$ In particular, if $\{P_n\}_{n \in\mathbb{N}}$ is an infinite family of (vertex-)disjoint paths connecting the rays $r$ and $s$ in $G$, then $\{\tau(P_n)\}_{n \in \mathbb{N}}$ are (vertex-)disjoint paths connecting  $\rho(r)$ and $\rho(s)$. Therefore, the map below is well-defined and it is a natural candidate to be an homeomorphism between $\Omega(G)$ and $\Omega_E(H)$:

$$ \begin{array}{cccc}
	\Psi:  &  \Omega(G) & \to & \Omega_E(H)\\
	& [r] & \mapsto & [\tau(r)]_E
\end{array} $$

The fact that $\Psi$ is surjective can be seen as an application of König's Lemma, while $\Psi$ is injective due to the main hypothesis over $G$:

\begin{prop}
	$\Psi$ is bijective.
\end{prop}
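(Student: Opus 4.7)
The plan is to prove $\Psi$ is bijective by handling injectivity and surjectivity separately, with both directions mediated by Lemma \ref{preimagemconexa}.

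For injectivity, I argue contrapositively: suppose $[r] \neq [s]$ in $\Omega(G)$, and produce a finite $F \subseteq E(H)$ separating $\tau(r)$ from $\tau(s)$ in $H$. Fix a finite $S \subseteq V(G)$ separating $r$ and $s$. For each $v \in S$, the hypothesis that $v$ edge-dominates at most one end implies $v$ fails to edge-dominate at least one of $[r], [s]$: choose a finite $F_v^r \subseteq E(G)$ separating $v$ from $[r]$ whenever possible, and analogously $F_v^s$, letting the other be empty when $v$ does edge-dominate the matching end. Set $F := \tau\bigl(\bigcup_{v \in S}(F_v^r \cup F_v^s)\bigr) \cup \{vv' : v \in S \cap D\}$, still a finite subset of $E(H)$. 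Supposing for contradiction that a path $P' \subseteq H \setminus F$ connects tails of $\tau(r)$ and $\tau(s)$, the cuts $\{vv' : v \in S \cap D\}$ rule out the degenerate case where $P'$ is a single duplication edge, so $P'$ contains at least one $\tau$-edge and both its endpoints project into $V(K)$, where $K$ is the connected $G$-subgraph supplied by Lemma \ref{preimagemconexa} with edges $\tau^{-1}(E(P'))$. Extracting a $K$-path from a vertex of $r$'s tail to a vertex of $s$'s tail gives a $G$-path $P$ lying in $G \setminus \bigcup_v(F_v^r \cup F_v^s)$; since $S$ separates $r$ from $s$, $P$ meets some $v \in S$, and at that vertex one of $F_v^r, F_v^s$ is nonempty, with the sub-path from $v$ to the corresponding tail avoiding that separator and so contradicting its defining property.

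For surjectivity, given a ray $r' \subseteq H$, let $K \subseteq G$ be the subgraph with edges $\tau^{-1}(E(r'))$; by Lemma \ref{preimagemconexa}, $K$ is connected. Each $v \in V(K)$ picks up at most four $K$-edges (two from each of its $H$-preimages $v, v'$, since $r'$ is a ray), so $K$ is locally finite. Because the edges $\{vv' : v \in D\}$ form a matching in $H$, they cannot appear consecutively on $r'$, and hence $r'$ uses infinitely many $\tau$-edges and $K$ has infinitely many vertices. König's Lemma then extracts a ray $s \subseteq K$. For each edge $e_i = w_i w_{i+1}$ of $s$, the edge $\tau(e_i)$ belongs to both $r'$ and $\tau(s)$, so both rays share its $w_i$-endpoint $q_i \in \{w_i, w_i'\}$; the $q_i$ are distinct because the $w_i$ are, so $\tau(s)$ and $r'$ share infinitely many vertices in $H$. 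Any finite edge-cut then leaves a late-enough common vertex linking their tails, so $\tau(s) \sim_E r'$ and $\Psi([s]) = [r']_E$.

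The main obstacle lies in injectivity, specifically in handling the short-circuit paths that exploit the duplication edges $\{vv' : v \in D\}$: these are invisible to $\tau^{-1}$, so the pull-back subgraph $K$ can fail to record them, and without explicitly cutting them the argument collapses precisely when $r$ and $s$ happen to share a vertex in their tails. Adding $\{vv' : v \in S \cap D\}$ to $F$ closes exactly this gap, and once it is closed, Lemma \ref{preimagemconexa} together with the observation that $S$ separates $r$ from $s$ in $G$ carries the remaining argument.
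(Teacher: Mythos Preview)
Your proof is correct and follows essentially the same route as the paper's: Lemma~\ref{preimagemconexa} plus K\"onig's Lemma for surjectivity, and the edge-separators $F_v$ lifted from a vertex-separator $S$ for injectivity. The paper differs only in applying Lemma~\ref{preimagemconexa} to the entire connected component $K'$ of $H\setminus F$ rather than to a single path $P'$; since $K'$ then contains full tails of $\tau(r)$ and $\tau(s)$, the pulled-back $K$ automatically contains arbitrarily late vertices of $r$ and $s$, and one may pick the $G$-path $P$ with endpoints past $S$, guaranteeing $V(P)\cap S\neq\emptyset$.

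Your extra cuts $\{vv' : v \in S\cap D\}$ are in fact unnecessary, and your justification for them does not quite do what you claim: those cuts rule out the degenerate single-edge path $P'=vv'$ only when $v\in S$, not when $v\in D\setminus S$. The latter case is excluded for a different reason --- since $[r]\neq[s]$, the rays $r$ and $s$ share only finitely many vertices, so by taking the endpoints of $P'$ far enough along the tails of $\tau(r)$ and $\tau(s)$ (which you are free to do), no common $G$-vertex remains and the degenerate case cannot occur. Thus the added edges are harmless but not load-bearing, and the paper's version is simply the leaner form of the same argument.
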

\begin{proof}
	Let $r'$ be a ray in $H$, whose presentation via edges might be written as $f_0f_1f_2\dots$. Since the edges $\{vv': v \in D\}\subset E(H)$ are pairwise non-adjacent, $\tau^{-1}(E(r'))$ is an infinite set of edges in $G$. Consider $K$ the subgraph of $G$ that contains precisely these edges and its endpoints, which is connected by Remark \ref{remark33}. We observe that every vertex $v \in V(K)$ has degree at most $4$. In fact, if $v\notin D$, then $v\in V(H)$ has at most two neighbors in $r'$. Similarly, $v$ and $v'$ have at most two neighbors each in $r'$ if $v\in D$, so that $v$ is the endpoint of at most four edges in $K$ in this case. By König's Lemma, then, there is $r$ a ray in $K$. Since $E(K) = \tau^{-1}(E(r'))$, the ray $\tau(r)$ meets $r'$ in infinitely many edges, so that $[\tau(r)]_E = [r']_E$. Hence, $\Psi$ is surjective.

	In order to conclude that $\Psi$ is injective, let $s$ and $r$ be non-equivalent rays in $G$. In other words, there is a finite set $S\subset V(G)$ that separates $r$ and $s$. Since $[s]\neq [r]$, by our main hypothesis over $G$ no vertex of $S$ edge-dominates both $r$ and $s$. Hence, for each $v \in S$ there is $F_v\subset E(G)$ finite that separates $v$ from $r$ or $s$. We claim that the finite set $\displaystyle \bigcup_{v\in S}\tau(F_v)$ separates $\tau(r)$ and $\tau(s)$ in $H$. For instance, suppose that the tails of $\tau(s)$ and $\tau(r)$ belong to the same connected component $K'$ of $H\setminus \displaystyle\bigcup_{v\in S}\tau(F_v)$. Then, again by Remark \ref{remark33}, there is a path $P$ in $G$ with $\tau(E(P))\subset H\setminus \displaystyle\bigcup_{v\in S}\tau(F_v)$ and connecting $r$ and $s$. However, there is also $v \in V(P)\cap S$ because $S$ separates these two rays. Since $E(P) \cap F_v = \emptyset$, this contradicts the fact that $F_v$ also separates the vertex $v$ from $r$ or $s$. 
\end{proof}

The continuity of $\Psi$ follows easily from the fact that any edge-separator has a natural vertex-separator associated, an argument also employed in Proposition \ref{phi} when we verified that $\Phi$ is an open map:

\begin{prop}
	$\Psi$ is continuous.
\end{prop}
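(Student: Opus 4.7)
The plan is to show continuity pointwise: given an end $[r] \in \Omega(G)$ and a basic neighborhood $\Omega_E(F,[\tau(r)]_E)$ of its image, I want to produce a finite $S \subset V(G)$ such that $\Psi(\Omega(S,[r])) \subset \Omega_E(F,[\tau(r)]_E)$. The natural candidate is obtained by ``projecting'' the endpoints of each edge of $F$ back to $V(G)$: for each $f \in F$, if $f = \tau(uv)$ for some $uv \in E(G)$, place $u$ and $v$ in $S$; if $f = vv'$ for some $v \in D$, place $v$ in $S$. Since $F$ is finite, so is $S$.

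The verification splits into the same two types of edges that appear in the definition of $\tau(P)$ for a path $P$ in $G$. Suppose $[s] \in \Omega(S,[r])$, so there is a path $P$ in $G\setminus S$ connecting tails of $r$ and $s$. I claim $\tau(P)$ is a path in $H\setminus F$ connecting the tails of $\tau(r)$ and $\tau(s)$. First, any edge of the form $\tau(e)$ with $e \in E(P)$ cannot lie in $F$: otherwise both endpoints of $e$ in $G$ would belong to $S$ by construction, contradicting $V(P) \cap S = \emptyset$. Second, any duplication edge $vv'$ (with $v \in D \cap V(P)$) appearing in $\tau(P)$ cannot lie in $F$ either: if $vv' \in F$, then by construction $v \in S$, again contradicting $V(P) \cap S = \emptyset$. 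Therefore $F$ does not separate $\tau(r)$ and $\tau(s)$, which gives $\Psi([s]) = [\tau(s)]_E \in \Omega_E(F,[\tau(r)]_E)$, as required.

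I do not foresee a serious obstacle here; the construction of $S$ is forced by the structure of $\tau$, and the argument closely parallels the analogous step in Proposition~\ref{phi}. The only point requiring a little care is the book-keeping for the two types of edges in $H$, so the proof should be organized around that dichotomy. Finally, one should remark that this suffices: since the sets $\Omega_E(F,[\tau(r)]_E)$ form a neighborhood base at $\Psi([r])$ and $\Psi$ is already known to be a bijection, the containment above establishes continuity at an arbitrary end $[r] \in \Omega(G)$.
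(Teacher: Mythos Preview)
Your proof is correct and follows essentially the same route as the paper: you project the endpoints of the edges in $F$ back to $V(G)$ via $\tau^{-1}$ (taking $\{u,v\}$ when $f=\tau(uv)$ and $\{v\}$ when $f=vv'$), obtain a finite $S$, and verify that for any path $P$ in $G\setminus S$ the lifted path $\tau(P)$ avoids $F$ by checking the two edge types separately. This is exactly the argument the paper gives, with the same construction of $S$ and the same verification.
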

\begin{proof}
	Let $F\subset E(H)$ be a finite set of edges and fix $\eta \in \Omega_E(H)$ an edge-end of $H$. Since $\Psi$ is surjective, there is a ray $r$ in $G$ such that $[\tau(r)]_E = \eta$. For each $e \in F\cap \tau(E(G))$, consider $S_e$ the set of endpoints of $\tau^{-1}(e)$. If $e\in F$ has the form $e = vv'$ for some $v\in D$, define $S_e = \{v\}$. Then, $S = \displaystyle \bigcup_{e\in F}S_e$ is finite.

	Let $C$ be the connected component of $G\setminus S$ in which there is a tail of $r$. If $s$ is any other ray in $C$ and $P$ is a path connecting $r$ and $s$, then $\tau(P)$ connects $\tau(r)$ to $\tau(s)$ in $H\setminus F$ by construction. This argument verifies the inclusion $\Psi(\Omega(S,[r]))\subset \Omega_E(F,\eta)$, proving that $\Psi$ is continuous.
\end{proof}

We observe that the criteria for defining the neighbors of $v$ and $v'$ in $H$, given $v\in D$, was not mentioned in the proofs of the previous propositions. In fact, finishing this section, it is employed only to show that $\Psi$ is an open map:

\begin{prop}
	$\Psi$ is an open map.
\end{prop}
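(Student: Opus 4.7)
The plan is: given $[r] \in \Omega(G)$ and a finite $S \subseteq V(G)$, I build a finite $F \subseteq E(H)$ with $\Omega_E(F, [\tau(r)]_E) \subseteq \Psi(\Omega(S, [r]))$; openness then follows since for any $[s] \in \Omega(S, [r])$ the sets $\Omega(S, [s])$ and $\Omega(S, [r])$ coincide, so the same construction applied to $[s]$ produces a basic neighborhood of $[\tau(s)]_E$ contained in $\Psi(\Omega(S, [r]))$. The construction of $F$ partitions $S$ according to edge-domination of $[r]$: for each $v \in S$ that does not edge-dominate $[r]$ (that is, either $v \notin D$, or $v \in D$ with $\varepsilon(v) \ne [r]$), pick a finite $F_v \subseteq E(G)$ separating $v$ from $r$ in $G$ and include $\tau(F_v)$ in $F$; for every $v \in S \cap D$, additionally include the edge $vv'$. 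Since $S$ is finite, so is $F$.

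Given a ray $s$ with $[\tau(s)]_E \in \Omega_E(F, [\tau(r)]_E)$, I take a path $P'$ in $H \setminus F$ joining tails of $\tau(r)$ and $\tau(s)$, and let $K$ be the pullback subgraph of $G$ with edges $\tau^{-1}(E(P'))$, which is connected by Lemma \ref{preimagemconexa} and contains vertices of both $r$ and $s$. A pullback argument parallel to the ones in Propositions \ref{injecao} and \ref{phi} rules out from $V(K)$ every $v \in S$ that does not edge-dominate $[r]$: were such a $v$ in $V(K)$, the initial segment of $P'$ from $\tau(r)$'s tail up to $v$ (or $v'$) would pull back to a $G$-path in $G \setminus F_v$ from $v$ to a tail of $r$, contradicting the choice of $F_v$. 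This reduces the problem to controlling $V(K) \cap S \cap D$ restricted to those $v$ with $\varepsilon(v) = [r]$.

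The hard case is precisely this residual one, where $v$ edge-dominates $[r]$ and so cannot be forced out of $V(K)$; this is where the rule used to define the neighbors of $v$ and $v'$, unused in the preceding propositions, finally comes in. Because $vv' \in F$, every visit of $P'$ to $v$ enters and exits through $\tau$-images of edges $uv$ with $u \in N_G(v) \cap E_{[r]}$, and every visit to $v'$ only through edges with $u \in N_G(v) \setminus E_{[r]}$. I would then combine this with the finite-adhesion property of $E_{[r]}$ from Lemma \ref{propriedadesenvelope} and with the fact that $[s] \ne [r]$ forces $V(s)$ to meet $E_{[r]}$ only finitely often, to argue that each detour of $K$ through such a $v$ can be replaced by a path inside $G \setminus S$, using that rays of $[r]$ are pairwise not separated by the finite set $S$ to bypass $v$ on the envelope side. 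The delicate step, and the main obstacle, is to carry out these bypasses simultaneously for all $v \in V(K) \cap S$ and stitch them into one single path from a tail of $r$ to a tail of $s$ in $G \setminus S$; once this path is produced, $[s] \in \Omega(S, [r])$ as needed.
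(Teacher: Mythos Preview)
Your set $F$ is not the one the paper uses, and the difference matters. You isolate each $v\in S$ by either separating $v$ from $r$ (when $v$ does not edge-dominate $[r]$) or by cutting the single edge $vv'$ (when it does). The paper instead works entirely through the envelope $E_\varepsilon$: for each $v\in S\setminus E_\varepsilon$ it treats the finitely many boundary vertices $u\in N(C_v)$ of the component $C_v$ of $G\setminus E_\varepsilon$ containing $v$ (cutting $uu'$ if $u\in D$, or $\tau(F_u')$ otherwise), and for each $v$ in the set $S'=\{w\in E_\varepsilon:\text{no path from }w\text{ to a tail of }s\text{ in }G\setminus S\}$ it cuts $vv'$ or a separator $\tau(J_v')$. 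The heart of the paper's proof is the \emph{Claim that $S'$ is finite}, which is non-trivial and uses the structure of $E_\varepsilon$. With this $F$ there is no bypass-stitching: one looks at the first vertex $v_i\in E_\varepsilon$ on the pullback path $P$ whose successor $v_{i+1}$ lies in the component $C_{s'}$ of $G\setminus E_\varepsilon$ containing a tail of $s'$, and reads off from the construction of $F$ both that $v_i\notin S'$ (so a path from $v_i$ to a tail of $s$ exists in $G\setminus S$) and that $S\cap C_{s'}=\emptyset$ (so the remainder of $P$ inside $C_{s'}$ already avoids $S$).

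What you call ``the delicate step, and the main obstacle'' is a genuine gap, and your $F$ does not obviously close it. Your bypass idea (``rays of $[r]$ are pairwise not separated by $S$'') controls the envelope side, but when $P'$ visits $v'$ rather than $v$ the neighbours of $v$ in the pullback lie \emph{outside} $E_{[r]}$, in components of $G\setminus E_{[r]}$ whose only attachment to the envelope might be $v$ itself; nothing in your $F$ prevents $S$ from meeting those components, nor from occupying the other boundary vertices you would need for a detour. Simultaneously repairing all such crossings, for all $v\in S$ that edge-dominate $[r]$, without creating new collisions with $S$, is not a routine argument with the edge set you have chosen. The paper sidesteps this entirely by baking the finite-adhesion data of Lemma~\ref{propriedadesenvelope} and the finite set $S'$ into $F$ from the start, so that a single crossing point suffices and no simultaneous surgery is required.
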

\begin{proof}
	Let $S\subset V(G)$ be finite and fix a connected component $C$ of $G\setminus S$ in which there is a ray $s$. Denote $\varepsilon = [s]$ and recall that we fixed a set $E_{\varepsilon}\subset V(G)$ as in (\ref{envelope}) in order to define $\tau$. Choosing the representative $s$ so that $V(s)\subset E_{\varepsilon}$, we will now show that $\Psi(\Omega(S,[s]))$ is open in $\Omega_E(H)$.

	First, for each $v \in S\setminus E_{\varepsilon}$, we observe that there is $C_v$ a connected component in $G\setminus E_{\varepsilon}$ containing $v$. By Lemma \ref{propriedadesenvelope}, the set $N(C_v) = \{u\in E_{\varepsilon}: u\text{ has a neighbor in }C_v\}$ is finite. Moreover, for each $u\in N(C_v)$, one of the options below is verified:
	\begin{itemize}
		\item If $u\in D$, define the singleton set $F_u = \{uu'\}\subset E(H)$;
		\item If $u\notin D$, there is a finite set $F_u'\subset E(G)$ that separates $u$ from $s$. Hence, we define $F_u = \tau(F_u')$.
	\end{itemize}
	In any case, $F_1 := \displaystyle \bigcup_{v \in S\setminus E_{\varepsilon}}\bigcup_{u \in N(C_v)}F_u$ is a finite set of edges of $H$. Relying on the following claim, a similar set can be defined:
	\begin{center}
		\texttt{Claim:} It is finite the set $$S' = \{v \in E_{\varepsilon}: \text{there is no path connecting }v\text{ and a tail of }s\text{ in }G\setminus S\}.$$  
	\end{center}
	\begin{proof}[Proof of the Claim]
		By definition, we first observe that $S$ separates vertices of $S'\setminus S$ from $s$. Therefore, $D_{\varepsilon}\cap S'\subset S$, where $D_{\varepsilon}$, as in (\ref{envelope}), is the set of vertices of $G$ that dominate $\varepsilon$. If we assume, for a contradiction, that $S'$ is infinite, then $S'\cap \displaystyle \bigcup_{r\in \mathcal{R}_{\varepsilon}}V(r)$ is infinite, where $\mathcal{R}_{\varepsilon}$ is also defined by Lemma \ref{propriedadesenvelope}. If $S'\cap V(r)$ is infinite for some $r \in \mathcal{R}_{\varepsilon}$, then there is a path connecting $s$ to a vertex of $S'\cap V(r)$ in $G\setminus S$, since $[s] = \varepsilon = [r]$. This contradicts the definition of $S'$, so that we may have $S'\cap V(r)$ finite for every $r\in \mathcal{R}_{\varepsilon}$. In particular, $\mathcal{R}_{\varepsilon}' = \{r \in \mathcal{R}_{\varepsilon}: S'\cap V(r) \neq \emptyset\}$ is infinite and composed by pairwise disjoint rays. Hence, there is also some ray $r\in \mathcal{R}_{\varepsilon}'$ such that $S\cap V(r) = \emptyset$, because $S$ is finite. Therefore, due to the equivalence between the rays $r$ and $s$, there is in $G\setminus S$ a path connecting a tail of $s$ to a vertex of $S'\cap V(r)$. This contradicts the definition of $S'$ once more, so that $S'$ must be finite. 
	\end{proof}
	
	If $v \in S'$ edge-dominates $s$, we define the set $J_v = \{vv'\}\subset E(H)$. Otherwise, there is a finite set $J_v'\subset E(G)$ that separates $s$ and $v$. In this case, we denote $J_v = \tau(J_v')$. Then, $F_2 := \displaystyle \bigcup_{v\in S'}J_v$ is a finite set of edges of $H$.

	Setting $F:= F_1\cup F_2$, let $C'$ be the connected component of $H\setminus F$ in which $\tau(s)$ has a tail. If another ray in that component has the form $\tau(s')$ for some ray $s'$ in $G$, then, according to Remark \ref{remark33}, there is a path $P$ in $G$ connecting $s$ and $s'$ such that $\tau(E(P))\subset E(H)\setminus F$.

	On the other hand, if $[s]\neq [s']$, by Lemma \ref{propriedadesenvelope} there is a connected component $C_{s'}$ in $G\setminus E_{\varepsilon}$ in which $s'$ has a tail. Hence, writing $P$ in terms of its vertices as $v_0v_1v_2\dots v_n$, also assuming that $v_0 \in E_{\varepsilon}$ and $v_n \in C_{s'}$, fix $i = \min \{0 \leq j \leq n : v_j \in E_{\omega}\text{ and }v_{j+1}\in C_{s'}\}$. Since $v_i \in E_{\varepsilon}$, one of the following cases must hold:
	\begin{itemize}
		\item If $v_i$ edge-dominates $s$, the edge $v_iv_{i}'$ is defined in $H$. Moreover, by definition of $\tau_{v_i}$, the path $P$ must contain this edge, because $v_{i+1}\in C_{s'}$. Hence, $v_iv_{i}'$ does not belong to $F$. By definition of $F_1$, this means that $S\cap C_{s'} = \emptyset$, while, by definition of $F_2$, $v_i\notin S'$;
		\item Supposing now that $v_i$ can be separated from $s$ by finitely many edges, we have $S\cap C_{s'} = \emptyset$. Otherwise, $E(P)\cap F_{v_i}' \neq \emptyset$, because $P$ connects the tails of $s$ and $s'$ and contains $v_i$, contradicting the fact that $\tau(E(P))\subset H\setminus F_1$. Analogously, if $v_i \in S'$ for instance, then we have $E(P)\cap J_{v_i}' \neq \emptyset$ by the same reason, contradicting the fact that $\tau(E(P))\subset H\setminus F_2$.
	\end{itemize}

	In both cases, we conclude that $S\cap C_{s'} = \emptyset$ and $v_i\notin S'$. Then, $C_{s'}$ is a connected subgraph of $G\setminus S$ containing $v_{i+1}$ and a tail of $s'$, while there is also a path in $G\setminus S$ connecting $v_i$ and a tail of $s$. Therefore, the tails of $s$ and $s'$ belong to the same connected component of $G\setminus S$, proving that $\Omega_E(F, [s]_E)\subset \Psi(\Omega(S,[s]))$. 
\end{proof}

\section{Topological implications of Theorem \ref{volta}}\label{contraexemplo}
\paragraph{}

Although Theorems \ref{volta} and \ref{ida} present a characterization of edge-end spaces in terms of the end spaces as originally defined in the literature, it is not immediate to assess whether these classes of topological spaces are the same. Actually, edge-end spaces define a strictly smaller subfamily of all the end spaces, which this Section aims to conclude in order to finish the proof of Theorem \ref{t1}.

However, this investigation relies on the work of Kurkofka and Pitz in \cite{representacao}, where these authors proved that end-spaces of graphs are precisely the topological spaces arising from ray spaces of special (order) trees. In particular, given a graph $G$, they constructed a tree $T$ such that the end-space of $G$ is also the end-space of some graphs on $T$. As the program carried out by this Section, we will verify that, if $G$ is under the hypothesis of Theorem \ref{ida}, such construction can lead to a countable order tree $T$, assuming convenient topological properties. Since countable graphs have metrizable end-spaces, by admitting normal (graph-theoretic) spanning trees for instance, this will prove the following:

\begin{thm}\label{t2}
	Let $X$ be a Lindelöf and first-countable topological space. If $X$ is the edge-end space of some graph, then $X$ is metrizable. 
\end{thm}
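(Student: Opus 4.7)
The plan is to combine Theorem \ref{volta} with the tree representation of \cite{representacao} to reduce metrizability of $X$ to metrizability of the end space of a countable graph.

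First, since $X$ is an edge-end space, Theorem \ref{volta} delivers a graph $G$ with $X \cong \Omega(G)$ such that every vertex of $G$ edge-dominates at most one end. Thus $G$ satisfies the hypothesis of Theorem \ref{ida}, and, crucially, the envelopes $E_{\varepsilon}$ of Lemma \ref{propriedadesenvelope} behave well: at each end, the dominating vertices are tied to a single envelope, so the Kurkofka-Pitz representation is applicable in a controlled way on this particular $G$.

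Next, I would run the Kurkofka-Pitz machinery on $G$ to obtain a special order tree $T$ together with a graph on $V(T)$ whose end space is homeomorphic to $\Omega(G)$. The levels of $T$ arise from finite vertex separators refined stepwise, the branches of $T$ correspond to ends of $G$, and the envelopes of Lemma \ref{propriedadesenvelope} distribute the vertices of $G$ among the branches. The central step is to show that, under the hypotheses on $X$, this $T$ may be chosen countable. Lindelöfness of $X$ should permit every clopen partition of $\Omega(G)$ realized at a level of $T$ to be refined to a countable one, bounding the branching of $T$ by $\omega$ at each node; first-countability of $X$ should force each branch of $T$ to have length at most $\omega$, because a countable neighbourhood basis of an end is encoded along that branch by successive refinements. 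Iterating, $T$ can be built countable.

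A countable $T$ then yields a countable graph $H$ with $\Omega(H) \cong X$. By Jung's theorem, a countable connected graph admits a normal spanning tree, and the end space of any graph with a countable normal spanning tree is a complete ultrametric with a countable basis of clopen balls, hence metrizable. Therefore $X$ is metrizable.

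The main obstacle I expect is the countability reduction of $T$: the Kurkofka-Pitz construction is transfinite in general and can produce arbitrarily large trees. The right bookkeeping device should be the special subbasis of \cite{caracterizacao} — clopen, $\sigma$-disjoint, noetherian and nested — because $\sigma$-disjointness combined with Lindelöfness trims each layer to countable size, while noetherianness and nestedness combined with first-countability prevent uncountable depth. Checking that these countability gains in $X$ translate back, through Theorem \ref{ida}, into a countable underlying graph respecting the edge-domination constraint is the delicate technical core that will need the most care.
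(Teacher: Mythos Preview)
Your overall strategy matches the paper's: apply Theorem \ref{volta} to obtain a graph $G$ with $\Omega(G)\cong X$ in which every vertex (edge-)dominates at most one end, run the Kurkofka--Pitz representation of \cite{representacao} to get an order tree $T$ encoding $\Omega(G)$, argue a countability reduction, and finish via normal spanning trees on a countable graph. The reference to Theorem \ref{ida} is misplaced (that is the converse direction and plays no role here), but this is minor.

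The genuine gap is in the countability step. You aim to make $T$ itself countable, but this fails: even under all the hypotheses, the partition tree produced in \cite{representacao} can have uncountably many nodes --- for instance, uncountably many ``dead'' successors that lie on no high-ray. The paper's key observation (Theorem \ref{Tchapeu} and Proposition \ref{tgrafo}) is that only the subtree $\hat T=\{t\in T:\ t\text{ lies on some high-ray}\}$ matters, since $\mathcal{R}(T)=\mathcal{R}(\hat T)$, and it is $\hat T$ that is shown countable. Your heuristics also mislocate the roles of the hypotheses. Lindel\"ofness is not used to bound branching at each node; it rules out an uncountable antichain of pairwise disjoint nonempty clopens in $\Omega(G)$ arising at a \emph{successor} level of $\hat T$. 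First-countability is not used to bound branch length (the height of $T$ is $\le\omega\cdot\omega$ by construction regardless); it is used at limit levels to rule out uncountably many ends accumulating at a single end along one branch. Finally, the ``dominates at most one end'' hypothesis coming from Theorem \ref{volta} --- which you mention but do not deploy --- is indispensable in the remaining subcase, where an uncountable limit level of $\hat T$ would otherwise force a single vertex (via Proposition \ref{buscaprofundidade}) to dominate uncountably many distinct ends. The special-subbase route from \cite{caracterizacao} that you suggest as an alternative is not used in the paper's proof of Theorem \ref{t2}.
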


Introducing the main notation of this section, we recall that a \textbf{tree}, in a set-theoretic context, is a partially ordered set $\langle T, \leq \rangle$ where $\mathring{\lceil t\rceil} = \{s \in T : s < t\}$ is well-ordered by $\leq$ for every \textbf{node} $t \in T$. The order type $\mathrm{h}(t)$ of $\mathring{\lceil t\rceil}$ is the \textbf{height} of the node $t\in T$, so that, for a given ordinal $\alpha$, the set $\mathcal{L}_{\alpha}(T) = \{t \in T : t\text{ has height }\alpha\}$ is the $\alpha$ \textbf{level} of $T$. If there is a \textbf{predecessor} $s = \max \mathring{\lceil t\rceil}$, we say that $t\in T$ is a \textbf{successor} point. Otherwise, the cofinality of $\mathring{\lceil t\rceil}$ is infinite and we say that $t$ is a \textbf{limit} point of $T$. For the least ordinal $\alpha$ such that $\mathcal{L}_{\alpha}(T) = \emptyset$, we say that $\alpha$ is the height of $T$.

Throughout this paper, trees will always be \textbf{rooted}, i.e., such that $|\mathcal{L}_0(T)| = 1$. Moreover, a set $R\subset T$ in which the order $\leq$ is total is called a \textbf{chain} of $T$, while a maximal one is called a \textbf{branch}. If $T$ has no infinite branches, we say that $T$ is \textbf{rayless}. Following the notation of \cite{representacao}, we say that $R\subset T$ is a \textbf{high-ray} if it is a down-closed chain of cofinality $\omega$. Then, the \textbf{tops} of this high-ray are the minimal elements greater then every node of $R$.

Graph-theoretic trees, i.e., the acyclic connected graphs, are examples of trees whose height is bounded by $\omega$, if endowed with the usual tree-order after fixing a root. On the other hand, Brochet and Diestel in \cite{tgrafos} introduced canonical graphs associated to a fixed order tree $T$. Following their terms, a $T-$\textbf{graph} is a graph whose vertex set is $T$ and such that, for every $t\in T$, its neighborhood $N(t)$ is cofinal in $\mathring{\lceil t\rceil}$. Moreover, if, for every limit node $t\in T$, there is a finite subset $X\subset \mathring{\lceil r\rceil}$ such that every $s > t$ has its neighbors below $t$ as elements of $X$, we say that $G$ is a \textbf{uniform} $T-$graph.

In particular, the endpoints of any edge in a $T-$graph are comparable in the order of $T$. Therefore, this definition generalizes the concept of \textit{normal spanning trees}, whose first studies by Jung for infinite graphs (see \cite{jung} for example) provided useful tools for studying topological and combinatorial behaviour of ends. In this context, we recall that, for a given graph $G$ and a fixed subgraph $T$, a $T -$\textbf{path} in $G$ is a path that has precisely its endpoints in $H$. If $T$ is an order tree, we say that $T$ is \textbf{normal} if every $T-$path has comparable endpoints in the tree order.

Both in finite and infinite settings, normal trees in graphs are usually found by \textit{depth-search algorithms}. For quite broad graph classes in the literature, such as for countable ones, these algorithms lead even to normal spanning trees. As an example to be recovered further in this Section, we outline the unified method developed by Pitz in \cite{buscaprofundidade} to obtain these structures, while revisiting his original proof in order to set a convenient statement for our purposes:  

\begin{prop}[\cite{buscaprofundidade}, Theorem 3]\label{buscaprofundidade}
	Let $G$ be any graph and fix a finite subset $K\subset V(G)$. Write $G'$ for the graph $ G\setminus K$. Then, $G'$ has a maximal normal (graph-theoretic) tree $T$ such that, for every connected component $C$ of $G'\setminus T$:
	\begin{itemize}
		\item The neighborhood $N(C) = \{v \in G\setminus C : v \text{ has a neighbor in }C \}$ is infinite. In particular, $N(C)\cap T$ is contained in a infinite branch $r_C$ of $T$;
		\item Every $v \in N(C)$ dominates $r_C$.
	\end{itemize}
\end{prop}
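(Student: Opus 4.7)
The plan is to construct the tree $T$ via a transfinite depth-first search on $G' := G \setminus K$. Starting from any root $r_0 \in V(G')$, at each successor stage I would extend the current partial tree by adding, when possible, a neighbor (in $G'$) of a currently maximal node as a successor; at limit stages of countable cofinality I would check whether the currently open high-rays admit common upper bounds among the unused vertices, and add such tops when they do. The recursion terminates at a tree that is normal and maximal by construction. Alternatively, Zorn's lemma on normal subtrees of $G'$ ordered by end-extension delivers the same object with less explicit control, since the union of a chain of normal trees is normal.

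Once $T$ is built, the main structural ingredient is the \textit{chain lemma} for normal trees: for any connected component $C$ of $G' \setminus T$, the trace $N(C) \cap T$ is a chain in the tree order. Indeed, if two vertices $s, t \in N(C) \cap T$ were incomparable in $T$, concatenating the corresponding edges to $C$ with an internal $C$-path would produce a $T$-path of $G'$ with incomparable endpoints, contradicting normality.

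The first bulleted property then follows from maximality. If $N(C) \cap T$ had a maximum $t^{*}$, one could attach a new vertex of $C$ adjacent to $t^{*}$ as a successor in $T$, producing a strictly larger normal tree. Hence $N(C) \cap T$ is an infinite chain without a top, which by the normality of $T$ extends to a unique infinite branch $r_C$. Because $K$ is finite and $N(C) \subseteq K \cup T$, the infiniteness of $N(C) \cap T$ also yields the infiniteness of $N(C)$.

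The hardest step will be verifying the domination property. Given $v \in N(C)$ and a finite $S \subseteq V(G) \setminus \{v\}$, I must connect $v$ to a tail of $r_C$ within $G \setminus S$. Picking $t \in N(C) \cap T$ above every element of $S \cap r_C$ (possible by the cofinality of $N(C) \cap T$ in $r_C$) together with a neighbor $u \in C$ of $t$, the task reduces to finding a path from $v$ to $u$ inside $C \setminus S$. This should rely on the fact that $C$ itself, by the depth-first nature of the construction, has internal connectivity aligned with $r_C$: a finite obstruction $S$ cannot disconnect $C$ from cofinally many of its attaching vertices. Making this reduction precise --- and in particular handling the case $v \in K$ separately from $v \in T$, since only the latter sits on $r_C$ by construction --- is the main obstacle.
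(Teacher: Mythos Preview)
Your approach has a genuine gap in the domination step, and it is not merely a matter of missing details: an arbitrary maximal normal tree need not satisfy the second bullet. Consider the ladder graph on $\{r_n, c_n : n\in\omega\}$ with edges $r_nr_{n+1}$, $c_nc_{n+1}$ and $r_nc_n$, and take $K=\emptyset$. The ray $T = r_0r_1r_2\dots$ is a maximal normal tree of $G'$ (adding any $c_k$ as a child of its unique $T$-neighbour $r_k$ creates the $T'$-path $c_kc_{k+1}r_{k+1}$ with incomparable endpoints), the unique component $C = \{c_n : n\in\omega\}$ has $N(C) = \{r_n : n\in\omega\}$ cofinal in $r_C = T$, yet $r_0\in N(C)$ does \emph{not} dominate $r_C$: deleting $\{r_1,c_0\}$ isolates $r_0$. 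Both your transfinite depth-first search (always choosing $r_{n+1}$ as the next vertex, with no tops available at stage $\omega$) and a Zorn argument can output exactly this $T$, so neither construction gives the control you need. Your heuristic that ``$C$ has internal connectivity aligned with $r_C$'' is simply false for such a $T$.

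The paper supplies the missing idea by \emph{not} taking a single maximal normal tree. It builds $T=\bigcup_{n<\omega}T_n$ as an increasing union of \emph{rayless} normal trees: since each $T_n$ is rayless and $K$ is finite, every component $C_n$ of $G'\setminus T_n$ has finite neighbourhood, one selects for each such $C_n$ a finite set $F_{C_n}\subset C_n$ of witnesses for $N(C_n)$, notes that $\bigcup_{C_n}F_{C_n}$ is dispersed, and invokes Jung's theorem to extend $T_n$ to a rayless normal $T_{n+1}$ containing these witnesses with $T_{n+1}\cap C_n$ connected. This staging is precisely what drives the domination argument: given $v\in N(C)\cap T$ and a finite $S\not\ni v$, choose $n$ with $v\in T_n$ and $S\cap T\subseteq T_n$; then $T_{n+1}\cap C_n$ (for the component $C_n\supseteq C$) is a connected subgraph of $G'\setminus S$ that contains the designated neighbour $x_v^n$ of $v$ and meets $r_C$. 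Your first-bullet argument via maximality is fine, but without the rayless approximation scheme the second bullet cannot be recovered.
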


\begin{proof}[Revisited proof from \cite{buscaprofundidade}]
	We will construct an increasing sequence $T_0\subseteq T_1\subseteq T_2\subseteq T_3\subseteq \dots$ of rayless normal trees in $G'$, by first fixing $T_0 = \{r\}$ an arbitrary root. Suppose that $T_n$ is defined for some $n <\omega$. Let $\mathcal{C}_n$ denote the collection of the connected components of $G'\setminus T_n$. Since $T_n$ is rayless and normal, the neighborhood $N(C_n) = \{v \in G\setminus C : v \text{ has a neighbor }x\in C_n\}$ is finite for every $C_n\in \mathcal{C}_n$, because $K$ is finite and $N(C)\cap T_n$ is contained in a branch of $T$.

	For every vertex $v\in N(C_n)$, fix $x_v^n\in C_n$ one of its neighbors. Then, by the previous observation, $F_{C_n} = \{x_v^n : v \in N(C_n)\}$ is finite. Moreover, if $r$ is a ray in $G$ that has a tail in $C_n \in \mathcal{C}_n$, then $N(C_n)\cup F_{C_n}$ is a finite set of vertices that separates $r$ from any vertex of $D_n := \displaystyle \bigcup_{C_n\in \mathcal{C}_n}F_{C_n}$. Due to this property, we say that $D_n$ is \textit{disperse}. Hence, by a well-known result of Jung in \cite{jung} (see also Theorem 2.2 of \cite{paracompacidade}), there is\footnote{For a more immediate construction of $T_{n+1}$, we may apply a routine depth-search algorithms within each connected component $C_n\in \mathcal{C}_n$, reaching the finitely many vertices of $F_{C_n}$. See Proposition 1.5.6 of \cite{livrodiestel} for details.} $T_{n+1}$ a rayless normal tree in $G'$ extending $T_n$ and containing $D_n$. Moreover, $T_{n+1}\cap C_n$ is connected for every $C_n\in \mathcal{C}$. After all, a path in $C_n$ connecting two vertices $v,u\in T_{n+1}\cap C_n$ must intersect $T_{n+1}$ in a third vertex if $u$ and $v$ are not comparable in its tree order, because $C_n\subset G'\setminus T_n$ and $T_{n+1}$ is normal.

	Once this inductive process has finished, we claim that $T = \displaystyle\bigcup_{n < \omega}T_n$ satisfies the statement. We first observe that $T$ is a normal tree, because so is $T_n$ for each $n < \omega$. Next, for a contradiction, suppose that $N(C)$ is finite for some $C \in \mathcal{C}$, where $\mathcal{C}$ denotes the family of connected components of $G'\setminus T$. Therefore, we must have $N(C)\cap T \subset T_n$ for some $n < \omega$. In this case, $C$ is also a connected component of $G'\setminus T_n$, contradicting the fact that $F_C \subset T_{n+1}\setminus T_n$. Hence, the first item of the Proposition holds.

	Now, consider $T'\supsetneq T$ a (graph-theoretic) tree in $G'$ extending the tree order of $T$, if it exists. Let $v \in T'\setminus T$ be minimal, i.e., such that $\mathring{\lceil v\rceil}\subset T$. Note that $\mathring{\lceil v\rceil}$ is finite, due to the fact that $T'$ has height bounded by $\omega$. Then, in $G'\setminus\mathring{\lceil v\rceil}$ there is a path connecting $v$ to a vertex of $T$, because the connected component $C\in \mathcal{C}$ containing $v$ has infinitely many neighbors in $T$. Since $T'$ extends the tree order of $T$, this verifies that $T'$ is not normal. In other words, $T$ is a maximal normal (graph-theoretic) tree.

	Finally, fixed $C\in \mathcal{C}$ and $v\in N(C)$, denote by $r_C$ the branch of $T$ containing $N(C)$. For a finite set $S\subset V(G)\setminus \{v\}$, let $n < \omega$ be big enough so that $v \in T_n$ and $S\cap T = S\cap T_n$. Consider $C_n \in \mathcal{C}_n$ the connected component of $G'\setminus T_n$ in which $C\subset C_n$ and, hence, $v\in N(C_n)$. In $T_{n+1}$, then, there is a path connecting $x_v^n$, a neighbor of $v$ in $C_n$, to a vertex $v'$ of $r_C$, because $T_{n+1}\cap C_n\subset G'\setminus S$ is connected. Therefore, $v$ dominates the ray $r_C$. 
\end{proof}

Relying on the above Proposition, the proof of Theorem \ref{t2} is done by revisiting Theorem 2 of Kurkofka and Pitz in \cite{representacao}. For a given graph $G$, their result is a decomposition of $V(G)$ in the shape of an order tree $T$, codifying $\Omega(G)$ through the high-rays of $T$. More precisely, a pair $(T,\mathcal{V})$ is called a \textbf{partition tree} of $G$ if $\mathcal{V} = \{V_t : t\in T\}$ is a partition of $V(G)$ into connected subsets satisfying the properties below:

\begin{itemize}
	\item $|V_t| = 1$ if $t\in T$ is not a limit point;
	\item The graph $\dot{G} := \frac{G}{\mathcal{V}}$ obtained by contracting the parts of $\mathcal{V}$ to single vertices is a $T-$graph;
	\item For each successor $t\in T$, the neighborhood $$N(V_{\lfloor t\rfloor}) = \left\{u \in V(G)\setminus \displaystyle V_{\lfloor t\rfloor} : u \text{ has a neighbor in }\displaystyle V_{\lfloor t\rfloor}\right\}$$ is finite, where $V_{\lfloor t\rfloor} = \displaystyle \bigcup_{s \geq t}V_s$. In this case, we say that $(T,\mathcal{V})$ has \textbf{finite adhesion}.
\end{itemize}

Hence, by tracking some rays of $G$, we are able to describe high-rays of $T$. In other words, given $[r]\in \Omega(G)$ an end, it is well-defined the set 
\begin{equation}\label{theta}
	\Theta([r]) = \{t \in T : r \text{ has a tail in }\displaystyle V_{\lfloor t\rfloor}\}.
\end{equation} This is clearly a down-closed chain of $T$ and, by Lemma 6.2 of \cite{representacao}, it has countable cofinality. If this cofinality is infinite, we say that $\Theta([r])$ \textbf{corresponds} to the end $[r]$, because $\Theta([r])$ is then an element of $\mathcal{R}(T) = \{\text{high-rays of }T\}$. When every $[r]\in \Omega(G)$ corresponds to precisely one high-ray of $T$, in the sense that $\Theta : \Omega(G) \to \mathcal{R}(T)$ is a bijection, we even say that the partition tree $(T,\mathcal{V})$ \textbf{displays} all the ends of $G$. The existence of partition trees with that property is guaranteed by Theorem 2 of \cite{representacao}, whose proof is partially adapted below to provide a convenient statement for the graph classes we aim to approach:

\begin{thm}[\cite{representacao}, Theorem 2] \label{Tchapeu}
	Let $G$ be a connected graph whose vertices dominate at most one end. Suppose that $\Omega(G)$ is a first-countable Lindelöf topological space. Then, $G$ has a partition tree $(T,\mathcal{V})$ that displays all its ends and such that:
	\begin{enumerate}
		\item $T$ has countable height, bounded by $\omega\cdot \omega$;
		\item The subtree $\hat{T} = \{t\in T : t\text{ belongs to a high-ray of }T\}$ is countable.
	\end{enumerate}
\end{thm}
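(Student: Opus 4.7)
The plan is to construct $(T,\mathcal{V})$ by an inductive depth-search procedure that iterates Proposition \ref{buscaprofundidade} in blocks of height $\omega$, and then to use first-countability and Lindelöfness of $\Omega(G)$ to cap the height and the size of $\hat{T}$. Starting with $V_\emptyset = V(G)$ as the root, at each non-limit node $t \in T$ whose associated region $V_{\lfloor t\rfloor}$ is still infinite, I would first remove from $G[V_{\lfloor t\rfloor}]$ a finite set $K_t$ (a separator witnessing some step in the countable neighborhood base of an end whose tail lies in $V_{\lfloor t\rfloor}$) together with the vertices dominating such ends; then I would apply Proposition \ref{buscaprofundidade} to the remainder to obtain a maximal normal rayless tree $T_t$ whose nodes populate the next $\omega$ levels above $t$. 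By the Proposition, each surviving connected component $C$ of the complement satisfies $N(C) \cap T_t$ is contained in an infinite branch $r_C$ dominated by every vertex of $N(C)$.

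At the limit node $t'$ at the top of that block, I would set $V_{t'}$ to be the envelope of $r_C$ in the sense of Lemma \ref{propriedadesenvelope}, i.e.\ the union of $r_C$ with its finitely many dominators. This makes $V_{t'}$ connected, provides finite adhesion for the partition, and — thanks to the hypothesis that each vertex of $G$ dominates at most one end — keeps the envelopes attached to different high-rays disjoint outside the boundary, so $\mathcal{V}$ really is a partition and $\dot{G}$ really is a $T$-graph. After this limit step, the construction continues inside each remaining component (which now plays the role of $V_{\lfloor s\rfloor}$ for some successor $s$ of $t'$), and the recursion restarts.

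To bound the height by $\omega \cdot \omega$, I would choose the separators $K_t$ nested along every branch so that the finite sets associated to the tree eventually exhaust a fixed countable neighborhood base (given by first-countability) around every end. Since the base is of order type $\omega$, each end is captured by a high-ray after at most $\omega$ blocks of depth-search, so the overall height is at most $\omega \cdot \omega$. To control the width, I would invoke Lindelöfness: at each stage, the ends whose tails still occupy some region can be covered by the basic open sets $\Omega(S,[r])$ corresponding to the current partial tree, and this cover admits a countable subcover; therefore only countably many limit nodes are needed at each level, and only countably many branches sprout from each. Since $\hat{T}$ tracks precisely the nodes lying on high-rays, it is then a countable union of countable pieces.

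The main obstacle I anticipate is the limit-stage bookkeeping that verifies $\Theta$ from (\ref{theta}) is a bijection onto the high-rays of $T$. Surjectivity amounts to showing the recursion really terminates for every end — this is where the choice of $K_t$ along the base of $\Omega(G)$ is essential, ensuring no end escapes unresolved to a chain of cofinality greater than $\omega$. Injectivity, that two ends never correspond to the same high-ray, rests on the finite adhesion at the successor blocks and on the fact that each vertex dominates at most one end, so that the envelope chosen at each limit node faithfully identifies a single end. Once these two points are secured, the $\omega\cdot\omega$ height bound and the countability of $\hat{T}$ follow from the block structure of the construction.
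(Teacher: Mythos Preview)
Your broad architecture---iterate Proposition~\ref{buscaprofundidade} in $\omega$-blocks and place something at each limit node---matches the paper, but two load-bearing pieces are off.

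\textbf{The limit-node part.} You propose $V_{t'}$ to be ``the envelope of $r_C$ in the sense of Lemma~\ref{propriedadesenvelope}, i.e.\ the union of $r_C$ with its finitely many dominators.'' This cannot work as written. First, the vertices of $r_C$ are already singleton parts of $\mathcal{V}$ sitting on the successor levels below $t'$, so they are not available to be reused as $V_{t'}$. Second, the envelope of Lemma~\ref{propriedadesenvelope} is $D_\varepsilon\cup\bigcup_{r\in\mathcal{R}_\varepsilon}V(r)$ for a \emph{maximal} family of disjoint rays; neither is $D_\varepsilon$ guaranteed finite nor is this set confined to the component $C$ under consideration. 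What the paper does instead is invoke Lemma~7.2 of \cite{representacao} to produce a connected $U_C\subset V(C)$ with finite adhesion; this $U_C$ lives entirely inside the fresh component $C$, so no overlap with previously assigned parts arises.

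\textbf{The roles of the hypotheses.} You use first-countability to force the height below $\omega\cdot\omega$ and Lindel\"ofness alone to bound $\hat{T}$. In the paper's argument these roles are different. The height bound $\omega\cdot\omega$ is automatic from the block structure; what actually needs proof is that $\mathcal{V}$ \emph{covers} $V(G)$ when $\kappa=\omega$, and this is exactly where the ``each vertex dominates at most one end'' hypothesis is spent: a vertex missed by $\bigcup_t V_t$ would have a neighbour dominating the rays $r_{C_n}$ for infinitely many $n$, and those rays are pairwise inequivalent by the choice of $U_{C_m}$. Meanwhile the countability of $\hat{T}$ requires \emph{both} topological hypotheses in a case split on the first uncountable level $\alpha$: Lindel\"ofness kills the successor case (uncountably many disjoint $\Omega(S,[r_i])$), but when $\alpha$ is a limit and uncountably many components $C_i$ hang off a \emph{single} branch $r$, it is first-countability at $[r]$ that yields the contradiction. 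Your Lindel\"of-only sketch does not cover this second case.
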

\begin{proof}[Revisited proof from \cite{representacao}]
	For some cardinal $\kappa \leq \omega$, we will recursively construct a sequence of partition trees $\{(T_n,\mathcal{V}_n)\}_{n < \kappa}$ for $G$.
	Let $T_0'$ be a maximal normal tree for the graph as in Proposition \ref{buscaprofundidade}, whose tree-order is denoted by $\leq$. Consider $F_0$ the set of connected components of $G\setminus T_0'$, so that each $C\in F_0$ has its infinite neighborhood $N(C)$ contained in a branch $r_C$ of $T_0'$. Then, the order $\leq$ can be extended to $T_0: = T_0'\cup F_0$, by declaring $C > t$ for every $C\in F_0$ and every $t\in r_C$. If we set $V_t^0= \{t\}$ for every $t\in T_0'$ and $V_C^0 = V(C)$ for every $C\in F_0$, we partition $G$ with the family $\mathcal{V}_0 = \{V_t^0 : t \in T_0'\}\cup \{V_C^0 : C \in F_0\}$. It is easily verified that $(T_0,\mathcal{V}_0)$ is a partition tree for $G$.

	Now, for some $n<\omega$, suppose that we have defined a partition tree $(T_n, \mathcal{V}_n)$ of height bounded by $\omega\cdot (n+1)+1$. If the height of $T_n$ is precisely $\omega \cdot (n+1)$, we set $\kappa = n+1$ and finish the construction. Otherwise, by induction, the nodes of height $\omega\cdot n$ in $T_n$ define a set $F_n$ with the following form:
	
	\begin{equation}\label{limites}
		C \in F_n \iff C\text{ is a connected component of }G\setminus \bigcup_{t\in T_n\setminus F_n}V_t.
	\end{equation}

	Moreover, for each $C\in F_n$, we assume that its neighborhood $N(C) = \{v \in V(G)\setminus C : v\text{ has a neighbor in }C\}$ is contained in a ray $r_C$ such that $\Theta([r_C])$ is a high-ray of $T_n$. Hence, for every $C \in F_n$ we can apply Lemma 7.2 of \cite{representacao} to obtain a connected vertex set $U_C\subseteq V(C)$ that encodes suitable topological properties. Among these, we mention:
	\begin{center}
		\texttt{Fact:} If $\mathcal{D}(C)$ is the set of connected components of $C\setminus U_C$, the subgraph $D\in \mathcal{D}(C)$ has finite neighborhood in $G$. In other words, the set $N(D) = \{v \in G\setminus D : v\text{ has a neighbor in }D\}$ is finite.
	\end{center}

	Then, rooted at a vertex that has some neighbor in $U_C$, we can fix $T_D$ a normal tree for $D$, obtained when applying Proposition \ref{buscaprofundidade} to $G[D\cup N(D)]$ with $K = N(D)$. Hence, every connected component $D'$ of $D\setminus T_D$ has infinitely many neighbors within a branch $r_{D'}$ of $T_D$. Moreover, since $N(D)$ is finite and $D'\subseteq D$, we have that $N(D') \setminus D$ is finite. Then, writing $ \mathcal{V}_n = \{V_t^n\}_{t\in T_n}$ and $\mathcal{V}_{n+1} = \{V_t^{n+1}\}_{t\in T_{n+1}}$, the partition tree $(T_{n+1},\mathcal{V}_{n+1})$ can be described as follows:
	\begin{itemize}
		\item The tree $T_{n+1}$ extends the order tree $T_n$, by additionally containing the nodes from $T_D$ for every $C\in F_n$ and every $D\in \mathcal{D}(C)$. In this case, we set $t> C$ for each $t \in T_D$. Moreover, for every connected component $D'$ of $D\setminus T_D$, we also see $D'$ as a node of $T_{n+1}$, defining $D' > t$ for every $t \in r_{D'}$;
		\item We set $V_t^{n+1} = V_t^n$ for every $t\in T_n\setminus F_n$. Given $C\in F_n$ and $D\in \mathcal{D}(C)$, however, we define $V_C^{n+1} = U_C$ and $V_t^{n+1} = \{t\}$ for every $t\in T_D$. Finally, we set $V_{D'}^{n+1} = V(D')$ for every connected component $D'$ of $D\setminus T_D$. 
	\end{itemize}

	Then, $|V_t^{n+1}| = 1$ for every successor $t \in T_{n+1}$ and $\frac{G}{\mathcal{V}_{n+1}}$ is indeed a $T_{n+1}-$graph. Relying on the above \texttt{Fact}, the choice of $V_{t}^{n+1}$ for a limit node $t\in F_n$ guarantees that $(T_{n+1},\mathcal{V}_{n+1})$ has finite adhesion. If $(T_n,\mathcal{V}_n)$ is defined for every $n < \omega$, we set $\kappa = \omega$. Then, writing $\mathcal{V} = \{V_t\}_{t\in T}$, the requested partition tree $(T, \mathcal{V})$ arises from the following limit definition:
	\begin{itemize}
		\item We set $T = \displaystyle \bigcup_{n < \kappa}T_n$, extending the order of $T_n$ for every $n < \kappa$;
		\item If $\kappa = n+1 < \omega$, we consider $\mathcal{V} = \mathcal{V}_n$, so that $(T,\mathcal{V})$ is the partition tree $(T_n,\mathcal{V}_n)$. Otherwise, given $t\in T$, we set $V_t = V_t^n$ for any $n > \min\{i < \omega : t\in T_i\}$. In this case, we claim that $\mathcal{V} = \{V_t\}_{t\in T}$ is a partition of $V(G)$. For instance, suppose that there is a vertex $v\in V(G)\setminus \displaystyle \bigcup_{t\in T}V_t$. As $(T_n,\mathcal{V}_n)$ is a partition tree for each $n<\omega$, we must have $v \in C_n$ for some $C_n \in F_n$. By considering a big enough $n_0 < \omega$, let $u \in \displaystyle \bigcup_{t \in T_{n_0}}V_t^{n_0}$ be a neighbor of $v$.  By the second item of Proposition \ref{buscaprofundidade}, $u$ dominates the ray $r_{C_n}$ for every $n \geq n_0$. However, if $n > m \geq n_0$, the choice of $U_{C_{m}}$ guarantees that $r_{C_n}$ and $r_{C_m}$ can be separated by finitely many vertices. In other words, $u$ dominates infinitely many non-equivalent rays, which is a contradiction. Hence, $\mathcal{V}$ is indeed a partition of $G$.
	\end{itemize}

	The verification that $T$ displays the ends of $G$ is precisely the one given by \cite{representacao}. Since $T$ has countable height, bounded by $\omega \cdot \omega$, it remains to show that the subtree $\hat{T} = \{t\in T : t\text{ belongs to a high-ray of }T\}$ is itself countable.

	For instance, suppose that $\hat{T}$ is uncountable and fix $$\alpha = \min \{\xi < \omega \cdot \omega : \text{the }\xi\text{ level of }\hat{T}\text{ is uncountable}\}.$$ That ordinal exists because $\hat{T}$ has countable height, and we have $\alpha > 0$ since $\hat{T}$ is rooted. If $\alpha$ is a successor ordinal, written as $\alpha = \beta + 1$, let $t\in \mathcal{L}_{\beta}(\hat{T})$ be a predecessor of an uncountable family of nodes $\{t_{i}\}_{i < \omega_1}\subset \mathcal{L}_{\alpha}(\hat{T})$. Also relying on the previous \texttt{Fact} and by passing $\{t_i\}_{i < \omega_1}$ to another uncountable subsequence if necessary, we can assume that $N(V_{\lfloor t_i\rfloor}) = N(V_{\lfloor t_j\rfloor}) = : S$ for every $i,j < \omega_1$, since $h(t)$ is countable and $(T,\mathcal{V})$ has finite adhesion. Then, $\{\Omega(S,\varepsilon) : \varepsilon \in \Omega(G)\}$ is an open cover for $\Omega(G)$ whose distinct elements are disjoint. However, by definition of $\hat{T}$, there is $r_i$ a ray in $G$ that has a tail in $V_{\lfloor t_i\rfloor}$, for each $i < \omega_1$. Hence, $S$ separates $r_i$ and $r_j$ if $i \neq j$. This means that $\{\Omega(S,[r_i]) : i < \omega_1\}\subset \{\Omega(S,\varepsilon) : \varepsilon \in \Omega(G)\}$
	is an uncountable subfamily whose elements are pairwise disjoint, contradicting the assumption that $\Omega(G)$ is a Lindelöf topological space.

	Therefore, $\alpha$ must be a limit ordinal, so that $\mathcal{L}_{\alpha}(T) = F_n$ for some $n < \omega$. We argue that $n \neq 0$. Otherwise, fix $\{C_i\}_{i < \omega_1}\subset F_0 \cap \hat{T}$ uncountable. Recall that these are connected components of $G\setminus T_0'$, and, therefore, are pairwise disjoint. Then, one of the following cases is verified, but both lead to contradictions:
	
	\begin{itemize}
		\item If there is an uncountable subset $I\subset \omega_1$ such that $r_{C_i} = r_{C_j} =:r$ for every $i,j \in I$, then each $C_i$ has infinitely many neighbors in the branch $r$ of $T_0'$.  As before, for each $i \in I$, let $r_i$ be a ray in $V_{\lfloor C_i\rfloor}$, whose existence is guaranteed by the definition of $\hat{T}$ and by the fact that $(T,\mathcal{V})$ displays the ends of $G$. Then, given a finite subset $S\subset V(G)$, we have $[r_i]\in \Omega(S,[r])$ for all but finitely many indices $i \in I$. This, however, contradicts the fact that $\Omega(G)$ is a first-countable topological space; 
		\item Then, there is an uncountable subset $I\subset \omega_1$ such that $r_{C_i}\neq r_{C_j}$ for every $i,j \in I$. In this case, $[r_{C_i}]\neq [r_{C_j}]$, because $(T,\mathcal{V})$ displays the ends of $G$. Since $r_{C_i}$ is a branch of $T_0'$, we have $r_{C_i}\subset \hat{T}$ for each $i \in I$. Fix $v_i\in r_{C_i}$ a neighbor of the connected component $C_i$. As $\hat{T}\cap T_0'$ is countable by the minimality of $\alpha$, there must be $v \in T_0'$ such that $v = v_i$ for uncountably many indices $i\in I$. According to Proposition \ref{buscaprofundidade}, this means that $v$ dominates uncountably many non-equivalent rays, contradicting the main hypothesis over $G$. 
	\end{itemize}

	Then, we must have $n > 0$. Moreover, $F_{n-1}\cap \hat{T}$ is countable, since this is a smaller (limit) level of $\hat{T}$. Hence, for some $C\in F_{n-1}\cap \hat{T}$, the set $\{t \in F_n \cap \hat{T}: t > C\}$ is uncountable. As an element of $\hat{T}$, the node $C$ has only countably many successors, because $\alpha$ is a limit ordinal. Therefore, we can fix $v_0\in \hat{T}$ a successor of $C$ such that $\{t\in F_n\cap \hat{T}: t > v_0\}$ is uncountable. By construction, we recall that $v_0$ is the root of a normal tree $T_D$ for a connected component $D$ of $C\setminus V_C^t$. Fixing an uncountable family $\{D_i'\}_{i < \omega_1}\subset \{t\in F_n\cap \hat{T}: t > v_0\}$, then, each $D_i'$ is a connected component of $D\setminus T_D$. Analogously to the above discussion, one of the following cases is verified, but both also lead to contradictions:
	
	\begin{itemize}
		\item Suppose that there is an uncountable set $I\subset \omega_1$ such that $r_{D_i'} = r_{D_j'}=:r$ for every $i,j\in I$. Since $\{D_i'\}_{i<\omega_1}\in F_n \cap \hat{T}$, the branch $r$ of $T_D$ is contained in $\hat{T}$. For each $i\in I$, let $r_i$ be a ray in $V_{\lfloor C_i\rfloor}$, whose existence is guaranteed by the definition of $\hat{T}$ and by the fact that $(T,\mathcal{V})$ displays the ends of $G$. Then, given a finite subset $S\subset V(G)$, we have $[r_i]\in \Omega(S,[r])$ for all but finitely many indices $i \in I$. This, however, contradicts the fact that $\Omega(G)$ is a first-countable topological space; 
		\item Then, there is an uncountable set $I\subset \omega_1$ such that $r_{D_i'} \neq r_{D_j'}$ for every $i,j\in I$. In this case, $[r_{D_i'}]\neq [r_{D_j'}]$, because $(T,\mathcal{V})$ displays the ends of $G$. Since $r_{D_i'}$ is a branch of $T_{D}$ for every $i\in I$, we have $r_{D_i'}\subset \hat{T}$, allowing us to choose $v_i\in \hat{T}\cap T_D$ a neighbor of $D_i'$. Observing that $\hat{T}\cap T_D$ is countable by the minimality of $\alpha$, there is $v\in \hat{T}\cap T_D$ a vertex such that $v = v_i$ for every $i$ within some uncountable subset of $I$. By Proposition \ref{buscaprofundidade}, this means that $v$ dominates uncountably many distinct ends of $G$, contradicting our main hypothesis over this graph.
	\end{itemize}

	Therefore, $\hat{T}$ is countable.
	
\end{proof}

Actually, by the proof of Theorem 2 in \cite{representacao}, one can translate the convergence of ends in $G$ through combinatorial properties of the partition tree $(T,\mathcal{V})$ constructed above. As discussed in that paper, this is a \textit{sequentially faithful} partition tree displaying all the ends of $G$, which encodes the topology of $\Omega(G)$ through the tree-structure of $T$. In our settings, Theorem 1 of \cite{representacao} proves the following:  

\begin{prop}[\cite{representacao}, Theorem 1]\label{tgrafo}
	Let $G$ be a graph as in Theorem \ref{Tchapeu}. Then, there is $T'$ an order tree such that $\Omega(G)$ is the end space of any uniform $T'-$graph. Moreover, $T'$ can be chosen so that $\hat{T'} = \{t\in T' : t \text{ belongs to a high-ray of }T'\}$ is countable.
\end{prop}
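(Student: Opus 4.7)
The plan is to take $T' := T$, where $(T,\mathcal{V})$ is the partition tree produced by our revisited proof of Theorem \ref{Tchapeu}, and then invoke (the unchanged) Theorem 3 of \cite{representacao} to conclude that any uniform $T'$-graph has end space homeomorphic to $\Omega(G)$. Concretely, I would first apply Theorem \ref{Tchapeu} to $G$ to obtain a partition tree $(T,\mathcal{V})$ displaying all ends of $G$, with $T$ of countable height bounded by $\omega\cdot\omega$ and with $\hat{T}$ countable. By the remark preceding the proposition, the construction carried out inside Theorem \ref{Tchapeu} is sequentially faithful: the map $\Theta$ of (\ref{theta}) is a homeomorphism between $\Omega(G)$ and the space of high-rays of $T$ endowed with its canonical topology. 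This is precisely the hypothesis under which Theorem 3 of \cite{representacao} applies.

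The content of Theorem 3 of \cite{representacao} is that, for a sequentially faithful partition tree $(T,\mathcal{V})$ of $G$ displaying all its ends, the end space of \emph{any} uniform $T$-graph is homeomorphic to $\Omega(G)$; this is possible because the end space of a uniform $T$-graph depends only on $T$ and can be canonically identified with its space of high-rays. Setting $T' := T$ therefore yields the first part of the statement directly, with no further work beyond citing the theorem. The second part, that $\hat{T'}$ is countable, is immediate from item $2$ of Theorem \ref{Tchapeu}, since $\hat{T'} = \hat{T}$ by construction.

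I expect no substantive obstacle here, as the technical load was absorbed by the revisited proof of Theorem \ref{Tchapeu}: the only mildly delicate point is making sure that the partition tree produced there genuinely satisfies the sequential faithfulness hypothesis of Theorem 3 of \cite{representacao}. This follows from the same limit argument used in the original proof in \cite{representacao}, since the recursive step (the application of Lemma 7.2 of \cite{representacao} to each $C \in F_n$ to choose $U_C$) is exactly what forces convergence of ends in $G$ to be read off the high-rays of $T$; our modifications to the construction only trim the levels of $\hat{T}$ to control its size, and do not alter the choice of the sets $U_C$ on which sequential faithfulness rests. Hence the appeal to Theorem 3 of \cite{representacao} goes through verbatim.
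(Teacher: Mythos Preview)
Your proposal has a genuine gap: setting $T' := T$ does not work, because the map $\Theta$ of (\ref{theta}) is in general only a bijection, not a homeomorphism, between $\Omega(G)$ and $\mathcal{R}(T)$. Sequential faithfulness controls convergent sequences, but the ray space $\mathcal{R}(T)$ typically carries a strictly finer topology than $\Omega(G)$. Concretely, if $t$ is a limit node of $T$ that is the unique top of a high-ray $r$, then $[s,\{t\}]$ is a basic open neighbourhood of $r$ in $\mathcal{R}(T)$ excluding every high-ray that passes through $t$; on the $\Omega(G)$ side, this would require a finite vertex set $S$ separating the end $\Theta^{-1}(r)$ from $V_{\lfloor s'\rfloor}$ for \emph{every} successor $s'$ of $t$ simultaneously. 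Since the adhesion sets $N_{s'} = N(V_{\lfloor s'\rfloor})$ are finite individually but their union over $s' \in S(t)$ need not be, no such $S$ exists in general, and $\Theta^{-1}([s,\{t\}])$ fails to be open. Thus the end space of a uniform $T$-graph (which is $\mathcal{R}(T)$) is not homeomorphic to $\Omega(G)$, and your appeal to Theorem 3 of \cite{representacao} with $T' = T$ is invalid; you have also misstated that theorem, which does not assert this.

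What the paper actually does---and what Theorem 3 of \cite{representacao} in fact proves---is to \emph{modify} $T$ into a new tree $T'$: each limit node $t$ with successors is deleted and replaced by a family of nodes $v(t,X)$, one for each finite $X \subset \lceil t\rceil$ that occurs as some $N_{s'}$. This splitting refines the tops of each high-ray so that excluding a single top $v(t,X)$ corresponds to excluding only those successors $s'$ with $N_{s'} = X$, which \emph{can} be achieved by a finite separator in $G$. It is $\mathcal{R}(T')$, not $\mathcal{R}(T)$, that is homeomorphic to $\Omega(G)$. Consequently $\hat{T'} \neq \hat{T}$, and the countability of $\hat{T'}$ is not immediate from Theorem \ref{Tchapeu}; one must further argue (as the paper does) that each new node $v(t,X)$ lying on a high-ray of $T'$ has $t \in \hat{T}$, and that only countably many finite subsets $X \subset \lceil t\rceil$ arise for each such $t$.
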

\begin{proof}[Revisited proof from \cite{representacao}]
	Fix the sequentially faithful partition tree $(T,\mathcal{V})$ for $G$ given by Theorem \ref{Tchapeu}. Let $T'$ be obtained from $T$ according to the procedure below:
	\begin{enumerate}
		\item For every limit node $t\in T$, denote by $S(t)$ the set of its successors, if there are some. Then, since $(T,\mathcal{V})$ has finite adhesion, the set $N_s : = N(V_{\lfloor s \rfloor}) $ is finite for each $s\in S(t)$; 
		\item Now, for every limit node $t\in T$ that has a successor and every finite $X\subset \lceil t \rceil$, we declare a new node $v(t,X)$ to be a successor of $t$ and a predecessor of each $s\in S(t)$ with $N_s = X$. We then remove $t$.
	\end{enumerate}

	Since only non-empty levels of $T$ were modified to construct $T'$, the height of this latter tree is also countable. In addition, the proof that $\Omega(G)$ is the end space of any $T'-$graph is precisely the one given in \cite{representacao}.

	Finally, let $t'\in T'$ be a node that belongs to a high-ray of $T'$. If $t'$ is a successor node in $T'$, then it is also a successor node in $T$ by construction of $T'$. In particular, $t'$ also belongs to a high-ray of $T$, so that $t'\in \hat{T}$. If $t'$ is a limit point, however, then $t' = v(t, X)$ for some $t\in T$ that has at least one successor and some finite subset $X\subset \lceil t \rceil$. Actually, the node $t$ must lie on a high-ray of $T$ (i.e., $t\in \hat{T}$), because $t'$ itself belongs to a high-ray of $T'$. Hence, since $\hat{T}$ is countable and there are countably many finite subsets of $\lceil t \rceil$ for every $t\in \hat{T}$, it follows that $\hat{T'}$ is also countable.   
\end{proof}

In its original statement, Theorem 1 of \cite{representacao} proves that every end space is the end space of some special order tree. For completeness, we recall that an order tree $T$ is \textbf{special} if it is a countable disjoint union of \textit{antichains}, i.e., sets of pairwise incomparable elements. Then, there are two different ways to see $\mathcal{R}(T)$ as a topological space, called the \textbf{ray space of the tree} $T$. Formalized by Lemma 5.3 of \cite{representacao}, the first way considers $\mathcal{R}(T)$ as the end space of any uniform $T-$graph $G$, identifying $\mathcal{R}(T)$ and $\Omega(G)$ through the map $\Theta$ as in (\ref{theta}) after considering the partition tree $(T, \{V_t\}_{t\in T})$. The second way, introduced by Lemma 2.1 of \cite{caracterizacao}, is defined intrinsically in terms of $T$, by declaring as basic open neighborhood around the high-ray $r\in \mathcal{R}(T)$ a set of the form \begin{equation} \label{intrinseco}
	[t,F] = \{s \in \mathcal{R}(T): t\in s\text{ and }t'\notin s\text{ for every }t'\in F\},
\end{equation} where $t\in r$ and $F\subset T$ is a finite collection of tops of $r$. The equivalence between these two approaches is discussed by Max Pitz in subsection 2.4 of \cite{caracterizacao}, as well as in Proposition 5.5 of \cite{representacao}. However, relying on both notions, we can conclude the main result of this section:

\begin{thm}
	Let $G$ be a graph in which every vertex dominates at most one end. If $\Omega(G)$ is a Lindelöf first-countable topological space, then $\Omega(G)$ is metrizable.  
\end{thm}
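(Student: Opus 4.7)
Plan. Since $\Omega(G)$ decomposes as a clopen disjoint union of the end spaces of the connected components of $G$, and Lindel\"of-ness allows only countably many components to contribute ends, I may assume $G$ is connected. Applying Proposition~\ref{tgrafo} then yields an order tree $T'$ with countable $\hat{T'}$ such that $\Omega(G) \cong \mathcal{R}(T')$ under the intrinsic topology from (\ref{intrinseco}). Because $\hat{T'}$ is down-closed in $T'$ and every high-ray of $T'$ lies entirely inside $\hat{T'}$ by definition, the set equality $\mathcal{R}(T') = \mathcal{R}(\hat{T'})$ is immediate.

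The central step is to verify that these two ray spaces carry the same topology. For this I would first show that every top $t^*$ of a high-ray $r$ in $T'$ must be a limit node with $\mathring{\lceil t^*\rceil} = r$: if $t^*$ were a successor, its immediate predecessor would be forced to be the maximum of $r$, contradicting $\mathrm{cf}(r) = \omega$; and any $s < t^*$ with $s\notin r$ would yield $r < s < t^*$, contradicting minimality of $t^*$ above $r$. Consequently, if $t^*\notin\hat{T'}$ then no high-ray $s$ can contain $t^*$, so the clause ``$t^*\notin s$'' in $[t,F]$ is automatic. Combined with the observation that tops of $r$ lying in $\hat{T'}$ remain tops of $r$ within $\hat{T'}$ (since $\hat{T'}$ is down-closed in $T'$, any candidate strictly between $r$ and such a top would already belong to $\hat{T'}$), this gives $[t,F] = [t, F\cap \hat{T'}]$, so the intrinsic bases of $\mathcal{R}(T')$ and $\mathcal{R}(\hat{T'})$ generate the same topology.

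Since $\hat{T'}$ is a countable order tree, any uniform $\hat{T'}$-graph $H_0$ is a countable graph with $\Omega(H_0) \cong \mathcal{R}(\hat{T'}) \cong \Omega(G)$ by Proposition~5.4 of \cite{representacao}. The end space of any countable graph is second countable: there are only countably many finite subsets $S\subset V(H_0)$ and, for each such $S$, the basic opens $\Omega(S,[r])$ are indexed by the countably many connected components of $H_0\setminus S$. Being Hausdorff and zero-dimensional, hence regular, $\Omega(G)$ is therefore metrizable by Urysohn's metrization theorem. The main obstacle is precisely the verification in the second paragraph: confirming that passing from $T'$ to the countable subtree $\hat{T'}$ does not alter the topology of the ray space, which rests on the structural fact that tops outside $\hat{T'}$ play no role since high-rays are already confined to $\hat{T'}$.
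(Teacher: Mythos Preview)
Your proposal is correct and follows essentially the same approach as the paper: apply Proposition~\ref{tgrafo} to obtain an order tree $T'$ with countable $\hat{T'}$, argue that $\mathcal{R}(T')$ and $\mathcal{R}(\hat{T'})$ coincide as topological spaces, and then use that a uniform $\hat{T'}$-graph is countable to conclude metrizability. The paper dispatches the identification $\mathcal{R}(T')\cong\mathcal{R}(\hat{T'})$ in a single line (``since the basic open neighborhoods given by (\ref{intrinseco}) are coincident''), whereas you supply the actual verification via the structure of tops; your added reduction to connected $G$ is also appropriate, since Proposition~\ref{tgrafo} inherits the connectedness hypothesis from Theorem~\ref{Tchapeu}.

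The only minor divergence is in the final step: the paper invokes normal spanning trees of countable graphs (citing \cite{spanningtrees}) to obtain metrizability, while you derive second countability directly and appeal to Urysohn. Both routes are standard and equivalent here. One small point worth tightening in your write-up: to conclude that the bases generate the \emph{same} topology you also need the reverse inclusion, namely that every top of $r$ computed in $\hat{T'}$ is already a top of $r$ in $T'$; this follows immediately from the down-closedness of $\hat{T'}$ (any strictly smaller upper bound in $T'$ would lie in $\hat{T'}$), and is implicit in your parenthetical remark but deserves to be stated for both directions.
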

\begin{proof}
	According to Proposition \ref{tgrafo}, there is $T$ an order tree such that $\Omega(G)$ is the end space of any uniform $T-$graph. Moreover, $T$ can be chosen so that $\hat{T} = \{t\in T : t\text{ belongs to a high-ray of }T\}$ is countable. However, $\mathcal{R}(T)$ and $\mathcal{R}(\hat{T})$ describe the same topological space, since the basic open neighborhoods given by (\ref{intrinseco}) are coincident. In particular, if $G'$ is a uniform $\hat{T}-$graph, its end space is homeomorphic to $\Omega(G)\simeq \mathcal{R}(T)$. However, $G'$ is countable, because so is $\hat{T}$. Therefore, $\Omega(G')$ is metrizable, since $G'$ has a normal spanning tree (see \cite{spanningtrees}). 
\end{proof}

Combining Theorem \ref{volta} with this statement, Theorem \ref{t2} follows. As a consequence, we are ready to exhibit a topological space in $\Omega\setminus \Omega_E$, also finishing the proof of Theorem \ref{t1}:

\begin{corol}
	The Alexandroff duplicate of the Cantor set is not an edge-end space, although it is the end space of a graph.
\end{corol}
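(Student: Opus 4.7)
The plan is to apply the contrapositive of Theorem \ref{t2}: if we can show that the Alexandroff duplicate $D(C)$ of the Cantor set $C$ is Lindelöf and first-countable but not metrizable, then it cannot belong to $\Omega_E$. Since Example 2.6 of \cite{caracterizacao} already places $D(C)$ in $\Omega$, this yields the claimed separation.

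First I would recall the description of $D(C)$: its underlying set is $C \times \{0,1\}$, every point $(x,1)$ is isolated, and basic neighborhoods of $(x,0)$ have the form $(U \times \{0,1\}) \setminus \{(x,1)\}$, for $U$ an open neighborhood of $x$ in $C$. From this description, first-countability is immediate, since isolated points trivially admit singleton neighborhood bases and each $(x,0)$ inherits a countable basis from a countable basis at $x$ in the metrizable compact space $C$.

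Next I would argue that $D(C)$ is compact (hence Lindelöf): this is the standard fact that the Alexandroff duplicate of a compact Hausdorff space is compact Hausdorff. A cover argument suffices, covering the ``bottom copy'' $C \times \{0\}$ by basic open sets that absorb all but finitely many of the associated isolated duplicates, and then covering the remaining finitely many isolated points directly. Thus the Lindelöf and first-countable hypotheses of Theorem \ref{t2} are met.

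The main (but still standard) step is to verify that $D(C)$ is not metrizable. The key observation is that $\{(x,1) : x \in C\}$ is an uncountable discrete subspace. Any compact metric space has countable weight, so every discrete subspace must be countable (each of its points would require a disjoint basic open set). Therefore $D(C)$ fails to be metrizable. Combining the three properties with the contrapositive of Theorem \ref{t2} shows $D(C) \notin \Omega_E$, while the cited example of Pitz guarantees $D(C) \in \Omega$, completing both the corollary and the proof of Theorem \ref{t1}. The only potential obstacle is the nonmetrizability argument, and even that reduces cleanly to the discrete-subspace observation above.
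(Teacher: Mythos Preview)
Your proposal is correct and follows essentially the same route as the paper: verify that the Alexandroff duplicate of the Cantor set is Lindel\"of (in fact compact), first-countable, and non-metrizable (via the uncountable family of isolated points), then invoke Theorem~\ref{t2}. The only cosmetic difference is that the paper checks these properties through the ray-space description of the special tree $T$ from Example~2.6 of \cite{caracterizacao}, whereas you work directly with the standard $C\times\{0,1\}$ model of the duplicate; the arguments are otherwise parallel.
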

\begin{proof}
	  The Alexandroff duplicate of the Cantor space is known to be homeomorphic to the end space of a suitable special tree $T$, as first highlighted by Kurkofka and Melcher in their Example 4.1 from \cite{KurkofkaMelcher} and later also discussed by Pitz through his Example 2.6 in \cite{caracterizacao}. Following these references, consider $T$ as obtained from the binary tree $2^{<\omega}$ after adding, for each branch $r\subset 2^{<\omega}$, a top $t_r$ and a countable increasing sequence of nodes $\{t_r = t_r^0 < t_r^1 < t_r^2 < t_r^3 < \dots \}$. We observe that $T$ is special, since it has height $\omega \cdot 2$ and its levels are antichains. Moreover, the space $\mathcal{R}(T)$ is first-countable, because the family of basic open neighborhoods described by (\ref{intrinseco}) is countable. In particular, for each branch $r$ of $2^{<\omega}$, the high-ray $\overline{r} = r\cup \{t_r^i : i <\omega\}$ is an isolated point of $\mathcal{R}$. By Proposition 2.16 of \cite{caracterizacao}, this is also a Lindelöf (actually compact) space, once the nodes of $T$ have countably many successors. Nevertheless, $\mathcal{R}(T)$ is not second-countable, because $\{\{\overline{r}\} : r \text{ is a branch of }2^{<\omega}\}$ is an uncountable family of pairwise disjoint open sets. Hence, $\mathcal{R}(T)$ is not a metric space. Therefore, in the graph-theoretical setting, Theorem \ref{t2} implies that the end space of any uniform $T-$graph\footnote{In this case, a $T-$graph can be obtained by declaring successor nodes to be adjacent to their predecessors, while defining the edges between each top $t_r$ and the nodes of the corresponding branch $r\subset 2^{<\omega}$. For constructions of graphs on arbitrary special trees, see Theorem 4.6 of \cite{representacao}.  } is not an edge-end space.
\end{proof}

\section*{Acknowledgments}
\paragraph{}
The first and the third named authors thank the support of Fundação de Amparo à Pesquisa
do Estado de São Paulo (FAPESP), being sponsored through grant numbers 2023/00595-6 and
2021/13373-6 respectively. In its turn, the second named author acknowledges the support of
Conselho Nacional de Desenvolvimento Científico e Tecnológico (CNPq) through grant number
165761/2021-0.

The authors also thank the anonymous referee for their detailed revisions and the suggested improvements. We are particularly grateful to the reviewer that observed how Remark \ref{remark33} could simplify the discussions carried out within Section \ref{sec:construction}.

\bibliography{referencias}

@article {freudenthal,
    AUTHOR = {Freudenthal, H.},
     TITLE = {Neuaufbau der {E}ndentheorie},
   JOURNAL = {Annals of Mathematics (2)},
  FJOURNAL = {Annals of Mathematics. Second Series},
    VOLUME = {43},
      YEAR = {1942},
     PAGES = {261--279},
      ISSN = {0003-486X},
   MRCLASS = {56.0X},
  MRNUMBER = {6504},
MRREVIEWER = {L. Zippin},
       DOI = {10.2307/1968869},
       URL = {https://doi.org/10.2307/1968869},
}

@article {Georgakopoulos,
    AUTHOR = {Georgakopoulos, A.},
     TITLE = {Graph topologies induced by edge lengths},
   JOURNAL = {Discrete Math.},
  FJOURNAL = {Discrete Mathematics},
    VOLUME = {311},
      YEAR = {2011},
    NUMBER = {15},
     PAGES = {1523--1542},
      ISSN = {0012-365X,1872-681X},
   MRCLASS = {05C10 (05C63)},
  MRNUMBER = {2800976},
       DOI = {10.1016/j.disc.2011.02.012},
       URL = {https://doi.org/10.1016/j.disc.2011.02.012},
}

@article{KurkofkaMelcher,
title = {Countably determined ends and graphs},
journal = {Journal of Combinatorial Theory, Series B},
volume = {156},
pages = {31-56},
year = {2022},
issn = {0095-8956},
doi = {https://doi.org/10.1016/j.jctb.2022.04.002},
url = {https://www.sciencedirect.com/science/article/pii/S0095895622000314},
author = {J. Kurkofka and R. Melcher},
keywords = {Infinite graph, Countably determined, End, Direction, End space, Axioms of countability, First countable, Second countable, Normal tree, Tree-decomposition},
abstract = {The directions of an infinite graph G are a tangle-like description of its ends: they are choice functions that choose a component of G−X for all finite vertex sets X⊆V(G) in a compatible manner. Although every direction is induced by a ray, there exist directions of graphs that are not uniquely determined by any countable subset of their choices. We characterise these directions and their countably determined counterparts in terms of star-like substructures or rays of the graph. Curiously, there exist graphs whose directions are all countably determined but which cannot be distinguished all at once by countably many choices. We structurally characterise the graphs whose directions can be distinguished all at once by countably many choices, and we structurally characterise the graphs whose directions cannot be distinguished in this manner. Our characterisations are phrased in terms of normal trees and tree-decompositions. Our four (sub)structural characterisations imply combinatorial characterisations of the four classes of infinite graphs that are defined by the first and second axiom of countability applied to their end spaces: the two classes of graphs whose end spaces are first countable or second countable, respectively, and the complements of these two classes.}
}

@article {hopf,
    AUTHOR = {Hopf, H.},
     TITLE = {Enden offener {R}\"{a}ume und unendliche diskontinuierliche
              {G}ruppen},
   JOURNAL = {Commentarii Mathematici Helvetici},
  FJOURNAL = {Commentarii Mathematici Helvetici},
    VOLUME = {16},
      YEAR = {1944},
     PAGES = {81--100},
      ISSN = {0010-2571},
   MRCLASS = {56.0X},
  MRNUMBER = {10267},
MRREVIEWER = {H. Whitney},
       DOI = {10.1007/BF02568567},
       URL = {https://doi.org/10.1007/BF02568567},
}

@article {halin,
    AUTHOR = {Halin, R.},
     TITLE = {\"{U}ber unendliche {W}ege in {G}raphen},
   JOURNAL = {Annals of Mathematics},
  FJOURNAL = {Mathematische Annalen},
    VOLUME = {157},
      YEAR = {1964},
     PAGES = {125--137},
      ISSN = {0025-5831},
   MRCLASS = {55.10 (05.40)},
  MRNUMBER = {170340},
MRREVIEWER = {W. T. Tutte},
       DOI = {10.1007/BF01362670},
       URL = {https://doi.org/10.1007/BF01362670},
}

@article{survey,
title = {Locally finite graphs with ends: A topological approach, {I}-{III}.},
journal = {Discrete Mathematics},
volume = {311-312},
year = {2010/11},
issn = {0012-365X},
doi = {https://doi.org/10.1016/j.disc.2010.05.027},
url = {https://www.sciencedirect.com/science/article/pii/S0012365X10001962},
author = {Diestel, R.},
keywords = {Infinite graph, Locally finite graph, End, Freudenthal compactification, Topology, Boundary, Infinite extremal graph theory, Cycle space},
abstract = {This paper is the second of three parts of a comprehensive survey of a newly emerging field: a topological approach to the study of locally finite graphs that crucially incorporates their ends. Topological arcs and circles, which may pass through ends, assume the role played in finite graphs by paths and cycles. The first two parts of the survey together provide a suitable entry point to this field for new readers; they are available in combined form from the ArXiv [20]. They are complemented by a third part [31], which looks at the theory from an algebraic-topological point of view. The topological approach indicated above has made it possible to extend to locally finite graphs many classical theorems of finite graph theory that do not extend verbatim. This second part of the survey concentrates on these applications, many of which solve problems or extend earlier work of Thomassen on infinite graphs. Numerous new problems are suggested.}
}

@article {diestel1992,
     AUTHOR = {Diestel, R.},
     TITLE = {The end structure of a graph: recent results and open
              problems},
      NOTE = {Special volume to mark the centennial of Julius Petersen's
              ``Die Theorie der regul\"{a}ren Graphs'', Part I},
   JOURNAL = {Discrete Math.},
  FJOURNAL = {Discrete Mathematics},
    VOLUME = {100},
      YEAR = {1992},
    NUMBER = {1-3},
     PAGES = {313--327},
      ISSN = {0012-365X},
   MRCLASS = {05C05 (01A65 01A67)},
  MRNUMBER = {1172358},
MRREVIEWER = {Carsten Thomassen},
       DOI = {10.1016/0012-365X(92)90650-5},
       URL = {https://doi.org/10.1016/0012-365X(92)90650-5},
}

@misc{caracterizacao,
      title={Characterising path-, ray- and branch spaces of order trees, and end spaces of infinite graphs}, 
      author={Pitz, M.},
      year={2023},
      eprint={2303.00547},
      archivePrefix={arXiv},
    howpublished={\href{https://arxiv.org/abs/2303.00547}{arXiv:2303.00547}},
      primaryClass={math.GN}
}

@misc{representacao,
      title={A representation theorem for end spaces of infinite graphs}, 
      author={J. Kurkofka and M. Pitz},
      year={2023},
      eprint={2111.12670},
        howpublished={\href{https://arxiv.org/abs/2111.12670}{arXiv:2111.12670}},
      archivePrefix={arXiv},
      primaryClass={math.CO}
}

@book {livrodiestel,
    AUTHOR = {Diestel, R.},
     TITLE = {Graph theory},
    SERIES = {Graduate Texts in Mathematics},
    VOLUME = {173},
   EDITION = {Fifth},
 PUBLISHER = {Springer, Berlin},
      YEAR = {2017},
     PAGES = {xviii+428},
      ISBN = {978-3-662-53621-6},
   MRCLASS = {05-01 (05Cxx)},
  MRNUMBER = {3644391},
       DOI = {10.1007/978-3-662-53622-3},
       URL = {https://doi.org/10.1007/978-3-662-53622-3},
}

@article{tgrafos,
author = {Brochet, J. M. and Diestel, R.},
year = {1994},
month = {05},
pages = {871--895},
title = {Normal Tree Orders for Infinite Graphs},
volume = {345},
journal = {Transactions of the American Mathematical Society},
doi = {10.1090/S0002-9947-1994-1260198-4}
}

@article{edgeends,
title = {Edge-Ends in Countable Graphs},
journal = {Journal of Combinatorial Theory, Series B},
volume = {70},
number = {2},
pages = {225-244},
year = {1997},
issn = {0095-8956},
doi = {https://doi.org/10.1006/jctb.1996.1734},
url = {https://www.sciencedirect.com/science/article/pii/S0095895696917347},
author = {G. Hahn and F. Laviolette and J. {\v{S}}ir{\'{a}}{\v{n}}},
abstract = {We introduce the notion of an edge-end and characterize those countable graphs which have edge-end-faithful spanning trees. We also prove that for a natural class of graphs, there always exists a tree which is faithful on the undominated ends and rayless over the dominated does.}
}

@article{nosso,
author = {Aurichi, L. and Real, L.},
year = {2025},
month = {03},
pages = {454-465},
title = {Edge‐Connectivity Between Edge‐Ends of Infinite Graphs},
volume = {109},
journal = {Journal of Graph Theory},
doi = {10.1002/jgt.23234}
}

@article{jung,
author = {Jung, H. A.},
title = {Wurzelbäume und unendliche {W}ege in {G}raphen},
journal = {Mathematische Nachrichten},
volume = {41},
number = {1-3},
pages = {1-22},
doi = {https://doi.org/10.1002/mana.19690410102},
url = {https://onlinelibrary.wiley.com/doi/abs/10.1002/mana.19690410102},
eprint = {https://onlinelibrary.wiley.com/doi/pdf/10.1002/mana.19690410102},
year = {1969}
}

@article{buscaprofundidade,
title = {A unified existence theorem for normal spanning trees},
journal = {Journal of Combinatorial Theory, Series B},
volume = {145},
pages = {466-469},
year = {2020},
issn = {0095-8956},
doi = {https://doi.org/10.1016/j.jctb.2020.07.002},
url = {https://www.sciencedirect.com/science/article/pii/S0095895620300691},
author = {Pitz, M.},
keywords = {Normal spanning tree},
abstract = {We show that a graph G has a normal spanning tree if and only if its vertex set is the union of countably many sets each separated from any subdivided infinite clique in G by a finite set of vertices. This proves a conjecture by Brochet and Diestel from 1994, giving a common strengthening of two classical normal spanning tree criterions due to Jung and Halin. Moreover, our method gives a new, algorithmic proof of Halin's theorem that every connected graph not containing a subdivision of a countable clique has a normal spanning tree.}
}

@article{paracompacidade,
title = {Approximating infinite graphs by normal trees},
journal = {Journal of Combinatorial Theory, Series B},
volume = {148},
pages = {173-183},
year = {2021},
issn = {0095-8956},
doi = {https://doi.org/10.1016/j.jctb.2020.12.007},
url = {https://www.sciencedirect.com/science/article/pii/S0095895620301180},
author = {Kurkofka, J. and Melcher, R. and Pitz, M.},
keywords = {Infinite graph, End, End space, Paracompact, Normal spanning tree},
abstract = {We show that every connected graph can be approximated by a normal tree, up to some arbitrarily small error phrased in terms of neighbourhoods around its ends. The existence of such approximate normal trees has consequences of both combinatorial and topological nature. On the combinatorial side, we show that a graph has a normal spanning tree as soon as it has normal spanning trees locally at each end; i.e., the only obstruction for a graph to having a normal spanning tree is an end for which none of its neighbourhoods has a normal spanning tree. On the topological side, we show that the end space Ω(G), as well as the spaces |G|=G∪Ω(G) naturally associated with a graph G, are always paracompact. This gives unified and short proofs for a number of results by Diestel, Sprüssel and Polat, and answers an open question about metrizability of end spaces by Polat.}
}

@article {spanningtrees,
    AUTHOR = {Diestel, R.},
     TITLE = {End spaces and spanning trees},
   FJOURNAL = {J. Combin. Theory Ser. B},
  JOURNAL = {Journal of Combinatorial Theory. Series B},
    VOLUME = {96},
      YEAR = {2006},
    NUMBER = {6},
     PAGES = {846--854},
      ISSN = {0095-8956,1096-0902},
   MRCLASS = {05C10 (05C05 54A10)},
  MRNUMBER = {2274079},
MRREVIEWER = {M.\ E.\ Watkins},
       DOI = {10.1016/j.jctb.2006.02.010},
       URL = {https://doi.org/10.1016/j.jctb.2006.02.010},
}
\bibliographystyle{amsplain}

\end{document}